 \newtheoremstyle{numberedstyle}
   {9pt}
   {9pt}
   {\normalfont}
   {}
   {\bfseries}
   {.}
   {\newline}
   {}
\numberwithin{equation}{section}
\newtheorem{thm}{Theorem}[section]%
\newtheorem{lem}[thm]{Lemma}%
\newtheorem{cor}[thm]{Corollary}%
\newtheorem{prop}[thm]{Proposition}%
\newcommand{\W}{\mathcal{W}}
\newcommand{\bdd}{\operatorname{bdd}}
\theoremstyle{numberedstyle}
\newtheorem{question}[thm]{Question}%
\newtheorem{defn}[thm]{Definition}%
\newcommand{\B}{\mathcal{B}}
\renewcommand{\H}{\mathbb{H}}
\newcommand{\e}{\operatorname{e}}
\newcommand{\real}{\operatorname{real}}
\title[Absence of wandering domains]{Absence of wandering domains \\
        for some real entire functions \\ 
       with bounded singular sets}
\author{Helena Mihaljevi\'{c}-Brandt}
\address{Mathematisches Seminar, Christian-Albrechts-Universit\"{a}t zu Kiel, 
24118 Kiel, Germany}
\email{helena.m-b@t-online.de}
\author{Lasse Rempe-Gillen}
\address{Dept.\ of Mathematical Sciences, University of Liverpool, Liverpool L69 7ZL, UK}
\email{l.rempe@liverpool.ac.uk}
\thanks{Both authors were supported by EPSRC 
EP/E017886/1. The second author was supported by EPSRC Fellowship EP/E052851/1.
Both authors gratefully acknowledge support received through the European CODY network.}
\subjclass[2000]{Primary 37F10; Secondary 30D05,30F45}
\renewcommand{\S}{\mathcal{S}}
\newcommand{\len}{\operatorname{len}}
\newcommand{\F}{\mathcal{F}}
\newcommand{\J}{\mathcal{J}}
\newcommand{\Vt}{\widetilde{V}}
\begin{document} 
 \begin{abstract}
  Let $f$ be a real entire function whose set $S(f)$ of
   singular values is real and bounded. We show that, if
   $f$ satisfies a certain function-theoretic condition
   (the ``sector condition''), then $f$ has no wandering domains.
   Our result includes all maps
   of the form $z\mapsto \lambda \frac{\sinh(z)}{z} + a$ with
   $\lambda>0$ and $a\in\R$. 

  We also show the absence of wandering domains
   for certain non-real entire functions for which $S(f)$ is bounded
   and $f^n|_{S(f)}\to\infty$ uniformly. 

  As a special case of our theorem, we give a short, elementary
   and non-technical proof that the Julia set of the exponential map
   $f(z)=e^z$ is the entire complex plane. 

  Furthermore, we apply similar methods to extend a result of Bergweiler, concerning
   Baker domains of entire functions and their relation to the postsingular set, to
   the case of meromorphic functions.
 \end{abstract}
 
 \maketitle

 \section{Introduction}

 Let $f:\C\to\C$ be an entire function. The \emph{Fatou set}
 $\F(f)$ consists of those points near which the family $(f^n)$ of iterates
  of $f$ is equicontinuous with respect to the spherical metric. That is,
  $\F(f)$ is the set where the dynamics of $f$ is regular, while the dynamics
  on its complement
  $\J(f) := \C\setminus \F(f)$ (the \emph{Julia set}) is chaotic.   
  A \emph{wandering domain} of $f$ is a connected component
  $U$ of $\F(f)$ such that $f^n(U)\cap f^m(U)=\emptyset$ whenever $n\neq m$.

 Problems concerning wandering domains tend to be difficult. 
  The question whether polynomials (and rational maps)
  can have wandering domains remained open for the best part of the
  twentieth century, until 
  Sullivan gave a negative answer in his celebrated
  \emph{No Wandering Domains Theorem} \cite{sullivan},
  using quasiconformal deformation theory. 
  This proof generalises to the class $\S$ of transcendental
  entire functions $f$
  for which the set $S(f)$ of \emph{singular values} is finite 
\cite[Theorem 3]{eremenko_lyubich_2}.
  (See the end of this section for definitions.) 
  Functions with infinitely many singular values
  may well have wandering domains; elementary examples are given by
  $f(z)=z-1 + e^{-z} + 2\pi i$, 
  $f(z)=z+2\pi \sin(z)$ or
   $f(z) = z + \lambda \sin(2\pi z) + 1$ for suitable $\lambda$;
   see \cite[p. 168]{waltermero}. 

\begin{figure}
\begin{center}
 \subfigure[$f_1(z)=z+2\pi\sin(z)$]{\includegraphics[width=\textwidth]{%
    sin_wandering_lighter_2}}
 \subfigure[$f_2(z)=z-1+e^{-z}+2\pi i$]{%
    \includegraphics[width=.49\textwidth]{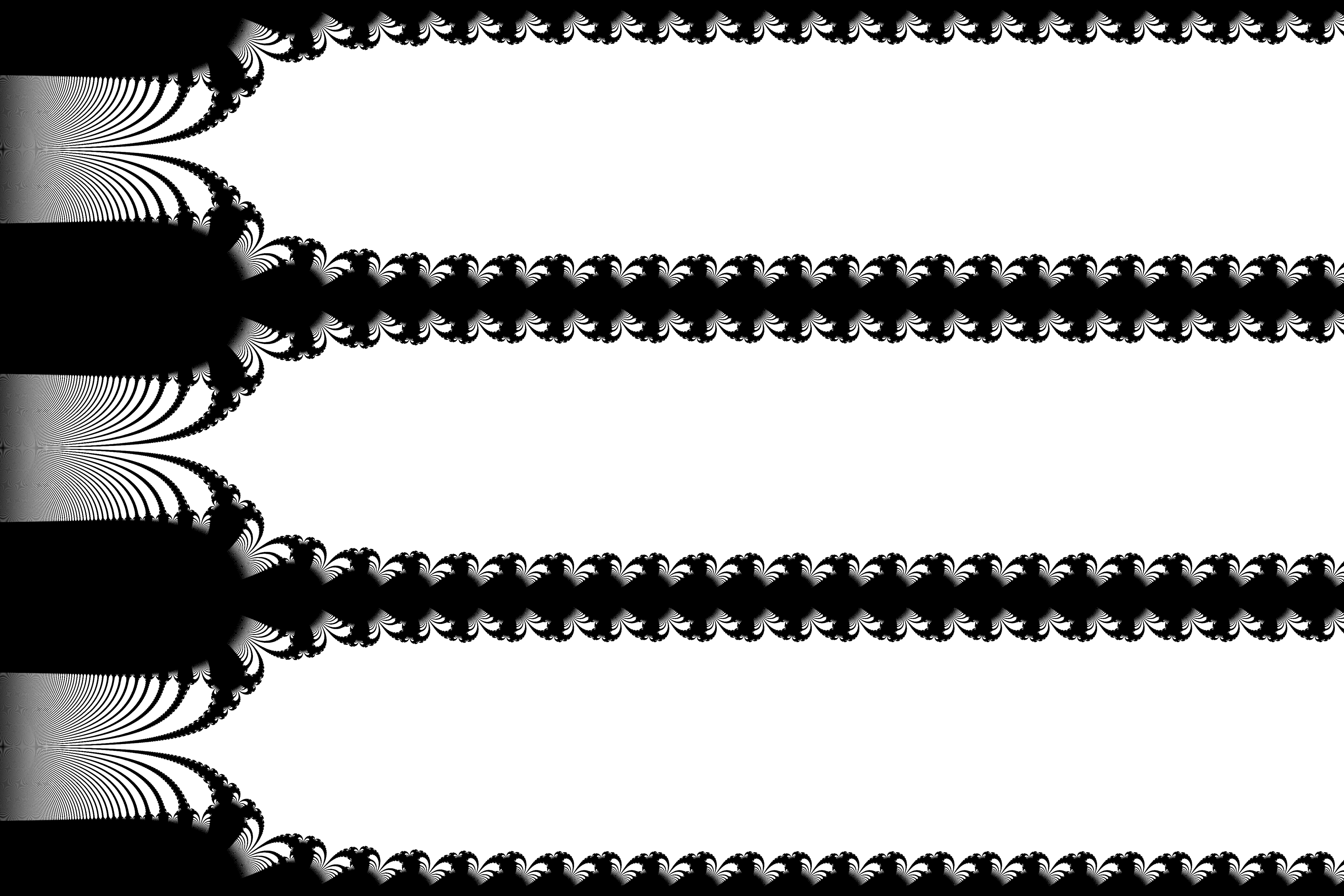}}\hfill
 \subfigure[$f_3(z)=\sinh(z)/z+2$]{%
    \includegraphics[width=.49\textwidth]{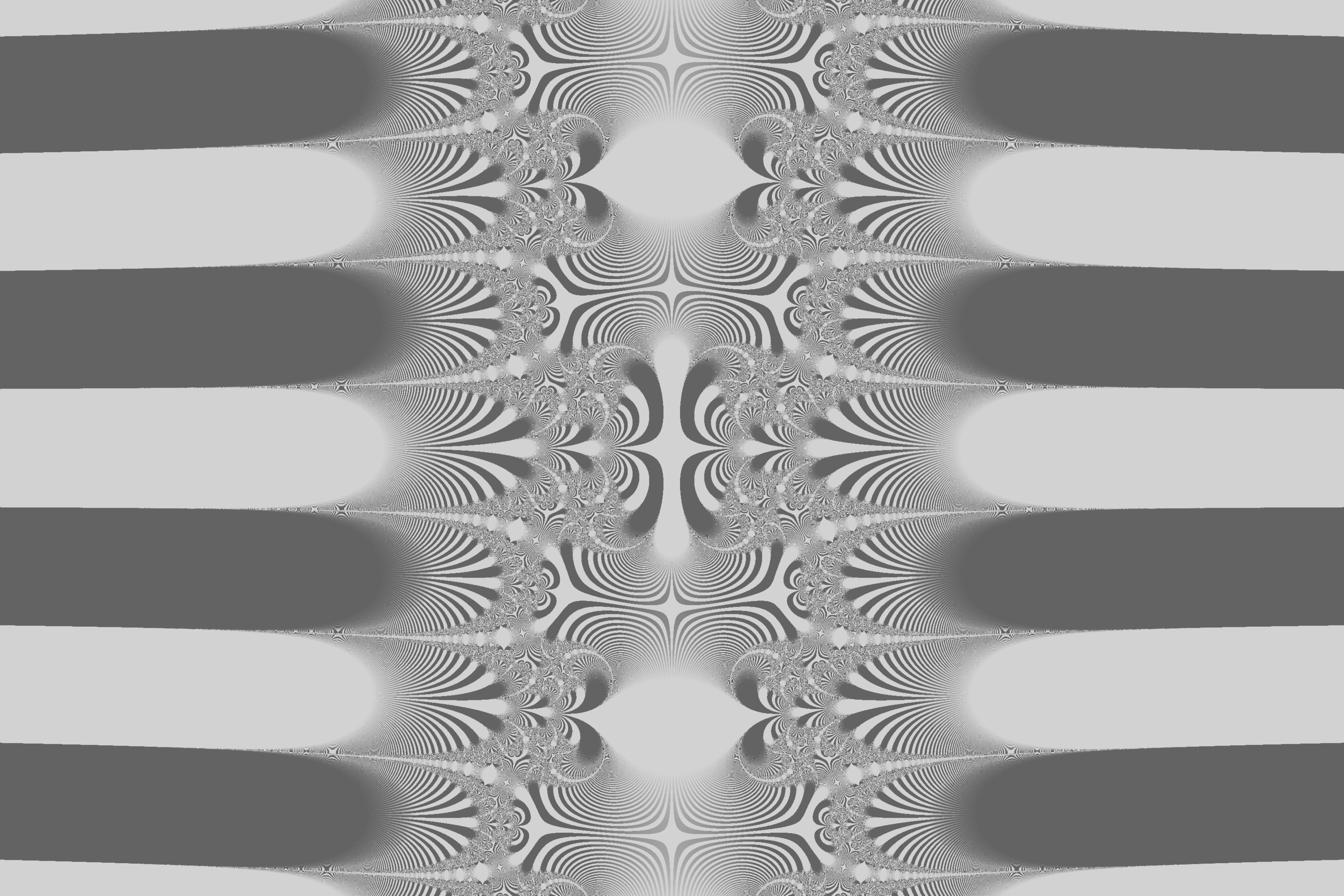}}
 \caption{\label{fig:juliasets}Each component of the
  Fatou sets of the functions $f_1$ and $f_2$
  (shown in white) is a wandering domain. On the other hand,
   our results show that
  $J(f_3)=\C$, as suggested by the picture. (For the maps
  $f_1$ and $f_3$, different gray-tones are used
  to highlight the structure within of the Julia set.)}
\end{center}
\end{figure}

 Quasiconformal deformation theory still appears to
  be essentially the only known method
  of proving the absence of wandering domains for general complex
  families of entire functions. 
  This method cannot be applied in most situations where $S(f)$ is infinite,
  and hence many problems are open in this setting. For example, a famous
  question 
  \cite[Problems 2.77, 2.87]{brannan_hayman} 
  asks whether an entire function can have a wandering domain with 
  a bounded orbit. 
    Indeed,
  for all explicit examples of wandering domains $U$, the iterates in $U$
  converge to infinity locally uniformly, although Eremenko and Lyubich
  \cite{elexamples} used approximation theory to construct examples
  with a finite accumulation point. 

 When the dynamical behaviour of the maps in question is restricted,
  other methods can be brought to bear. It is known that
  every limit point of an orbit in a wandering domain is a non-isolated point 
  of the 
  postsingular set $P(f)$ \cite[Theorem on p.\ 370]{bergweiler_etal}. 
  Furthermore, Eremenko and Lyubich \cite[Theorem 1]{eremenko_lyubich_2} showed
  that a function in the class
    \[ \B := \{f:\C\to\C \text{ transcendental entire}: 
                   S(f) \text{ is bounded} \} \]
   cannot have a wandering domain in which the iterates tend to infinity.

  Nonetheless, the settings in which these results can be applied to
   rule out wandering domains altogether tend
   to be rather restrictive, and many natural questions remain. For example,
   a well-known problem asks whether 
   a function in the Eremenko-Lyubich class $\B$
   can have wandering domains at all.\footnote{%
   Since this paper was first submitted, Bishop has announced
   the construction of such a function. Here the orbit of the wandering domain
   contains (infinitely many) critical values; hence 
   Question~\ref{question:main} remains open.}

  We propose the following variant of this question.

  \begin{question} \label{question:main}
   Let $f\in\B$, and suppose that the singular values of $f$ tend to infinity
    uniformly under iteration, that is,
     $\lim_{n\to\infty} \inf_{s\in S(f)} |f^n(s)| = \infty$.
    Can $f$ have a wandering domain? 
  \end{question}

  A function satisfying these hypotheses
   is given by $f(z) = \sinh(z)/z + a$, for
   $a$ sufficiently large. The question seems to have remained 
   open until now even for this simple example. The following result allows
   us to solve the problem in this case and many others. (Here and throughout, 
   $\dist$ denotes Euclidean distance.) 
    
 \begin{thm} \label{thm:mainescaping}
  Let $f\in\B$ be a function for which
   $f^n|_{S(f)}\to\infty$ uniformly.
   Let $A\subset \C$ be a closed set
   with $(S(f)\cup f(A))\subset A$  
   such that all connected components of $A$ are
   unbounded. 

  Suppose that there exist $\eps>0$ and $c\in(0,1)$ with the following property:
   if $z\in A$ is sufficiently large and $w\in\C$ satisfies
    $|w-z|<c|z|$, then 
    $\dist(f(w),S(f))>\eps$.

  Then $f$ has no wandering domains. 
 \end{thm}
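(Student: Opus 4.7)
I argue by contradiction: suppose $f$ has a wandering Fatou component $U$, and write $U_n$ for the Fatou component containing $f^n(U)$.

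\textit{Step 1: Finite limit in $A$.} Since $f\in\B$, the theorem of Eremenko and Lyubich \cite{eremenko_lyubich_2} prevents $f^n|_U$ from escaping to infinity, so passing to a subsequence I find $u_0\in U$, $n_k\to\infty$ and $\zeta:=\lim_k f^{n_k}(u_0)\in\C$. By the theorem of Bergweiler et al.\ \cite{bergweiler_etal}, $\zeta$ is a non-isolated point of $\P(f)=\overline{\bigcup_n f^n(S(f))}$. Uniform escape of $S(f)$ implies that $\bigcup_{n\ge N}f^n(S(f))$ leaves any prescribed compact set for large $N$, so $\zeta$ must be an accumulation point of the \emph{finite} union $\bigcup_{n<N}f^n(S(f))$ for suitable $N$; closedness of $A$ together with $S(f)\cup f(A)\subset A$ then forces $\zeta\in A$.

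\textit{Step 2: Expansion from the sector condition.} Set $V:=\C\setminus\overline{N_\eps(S(f))}$, a hyperbolic planar domain containing a neighbourhood of $\infty$; then $f\colon f^{-1}(V)\to V$ is a covering. The sector hypothesis says that for every sufficiently large $z\in A$, $D(z,c|z|)$ lies in the component $W_z$ of $f^{-1}(V)$ containing $z$, so monotonicity of the hyperbolic metric yields $\rho_{W_z}(z)\le 2/(c|z|)$. Since $f\colon W_z\to V$ is a conformal covering, the local inverse $g_z$ with $g_z(f(z))=z$ satisfies
\[
 |g_z'(f(z))|\;=\;\frac{\rho_V(f(z))}{\rho_{W_z}(z)}\;\ge\;\frac{c\,|z|\,\rho_V(f(z))}{2}.
\]
Combined with the standard Eremenko--Lyubich expansion on $\B$-tracts (valid where $|f(z)|$ is large), this gives strong expansion for inverse branches of $f$ landing along unbounded components of $A$.

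\textit{Step 3: Contradiction.} I then fix a small disc $\Delta\ni\zeta$ and, for large $k$, consider the inverse branches of $f^{n_k}$ near $f^{n_k}(u_0)$ taking that point back to $u_0$. Using Step~2 at those indices $j<n_k$ for which $f^j(u_0)$ is large and near $A$, together with the Eremenko--Lyubich contraction on tracts at those indices where $f^j(u_0)$ is large but off~$A$, one expects to obtain a uniform bound in $k$ on a suitable hyperbolic diameter of the component of $(f^{n_k})^{-1}(\Delta)$ through~$u_0$. Combining this with the normality of $(f^n)$ on $U$ and the pairwise disjointness of the Fatou components $U_n$ forces contradictory constraints on how infinitely many such preimages can sit inside $U$, yielding the contradiction.

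\textit{Main obstacle.} The technical heart is Step~3: converting the pointwise sector estimate into uniform control of inverse branches along a wandering orbit. Difficulties are that the intermediate points $f^j(u_0)$ may lie neither near $A$ nor inside a $\B$-tract; that every neighbourhood of $\zeta$ necessarily contains critical values of $f^{n_k}$ (since $\zeta$ is accumulated by forward iterates of $S(f)$), so $\Delta$ cannot be chosen free of branch points; and that there is no a~priori single hyperbolic metric on $\F(f)\cup V$ against which to run a direct comparison. One expects that the forward invariance $f(A)\subset A$, unboundedness of every component of $A$, and a careful bookkeeping of how long the orbit can remain bounded while staying off $A$, together furnish the extra structure needed to push the argument through.
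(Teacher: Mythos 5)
There is a genuine gap, and you have identified it yourself: Step~3 is only a sketch of obstacles, not an argument, and the way you have set up Step~2 makes the gap hard to close. Your ambient hyperbolic domain $V=\C\setminus\overline{N_\eps(S(f))}$ is not dynamically invariant: $f^{-1}(V)$ is not contained in $V$, and the orbit of the wandering domain will certainly enter $N_\eps(S(f))$ if it accumulates on a singular value. So there is no monotone quantity to track along the orbit, and you are stuck trying to control inverse branches through intermediate points that lie ``neither near $A$ nor inside a $\B$-tract,'' exactly as you note. The missing idea is to take $U:=\C\setminus A$ instead. Because $S(f)\cup f(A)\subset A$, the set $A$ is forward invariant, hence $U':=f^{-1}(U)\subset U$, and $f\colon U'\to U$ is a \emph{covering map} (since $U$ avoids $S(f)$). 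By Pick's theorem the hyperbolic derivative $\delta_n:=\|Df^n(w)\|_U$ along the orbit $w_n=f^n(w)\in U$ is then nondecreasing, while $\tilde\delta_n:=\|Df^n(w)\|_{f^n(W)}^W$ is nonincreasing; since they differ by the bounded factor $\rho_U/\rho_{f^n(W)}$, both are squeezed, which forces $\|Df(w_n)\|_U\to 1$ and, at the return times, $d_U\bigl(w_{n_k-1},\,U\setminus f^{-1}(D_\eps(s)\cap U)\bigr)\to\infty$. That is the content of the paper's Theorem~\ref{thm:expansion}, and it replaces your entire Step~3.

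Two further points. First, your Step~1 only concludes $\zeta\in A$; that is not enough. You need to know that the orbit accumulates at a point $s\in S(f)$ along a subsequence $(n_k)$ for which the \emph{previous} iterates $f^{n_k-1}(w)$ tend to $\infty$ — otherwise the sector hypothesis (which concerns points $z\in A$ that are ``sufficiently large'') gives you nothing. This is the paper's Lemma~\ref{lem:singvalue}, whose proof uses the uniform escape of $P(f)$ away from a compact set in a nontrivial way and is not a routine consequence of \cite{bergweiler_etal}. Second, once you have $s\in S(f)$, $f^{n_k}(w)\to s$, $f^{n_k-1}(w)\to\infty$, and $U=\C\setminus A$, the sector hypothesis enters precisely as you anticipated in Step~2 but measured against $\partial U=A$ rather than $\partial V$: for a large point $z_0\in V:=f^{-1}(D_\eps(s)\cap U)$, the hypothesis gives $\dist(z_0,\partial U)\gtrsim c|z_0|$, and integrating $\rho_U\le 2/\dist(\cdot,\partial U)$ along the circle of radius $|z_0|$ (which must meet $\partial V$, since each component of $A$ is unbounded) gives the uniform bound $d_U(z_0,\partial V)\le 8\pi/c$. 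That uniform bound is exactly what contradicts the growth of $d_U(w_{n_k-1},\partial V)$ from the expansion lemma. So your Step~2 computation is in the right spirit, but it needs to be done in $U=\C\setminus A$ and combined with the covering-map monotonicity rather than with an attempted branch-by-branch Koebe estimate.
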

\begin{remark}[Remark 1]
  For the function $f(z)=\sinh(z)/z + a$, with $a$ real and sufficiently large,
   the hypotheses are 
   satisfied for $A = [a,\infty)$. 
\end{remark}
\begin{remark}[Remark 2]
  Usually, we think of 
   the set $A$ as a union of one or finitely many
   curves to infinity (``hairs'' or ``dynamic rays'' as constructed in
   \cite{strahlen} for a large class of functions). Assuming the existence
   of such a set of curves, the 
   condition in the theorem then reduces essentially to the function-theoretic
   requirement
   that the tracts of the function $f$ over infinity are sufficiently
   thick along $A$. This condition tends to be automatically
   satisfied by explicit entire
   functions of finite order. 
\end{remark}
\begin{remark}
 We note that our result requires in no way that our functions 
  depend on only finitely many parameters, such as in Sullivan's theorem.
  Indeed, the functions
   $f(z) = \sinh(z)/z + a$ naturally belong to an 
   infinite-dimensional real-analytic parameter space. 
\end{remark}

 If $f\in\B$ is real---i.e., $f(\R)\subset\R$---with only real
  singular values, then it follows
  from the recent solution of the rigidity problem
  for real-analytic maps of the interval \cite{kss2,strienrigidity} that $f$
  cannot have a wandering domain with a bounded orbit 
  (compare \cite{rempe_vanStrien}.) We can combine
  this with our method to obtain a more complete picture for a large class
  of such functions. 

  \begin{thm} \label{thm:main}
   Let $f\in\B$ such that $S(f)\cup f(\R)\subset\R$. Furthermore,
    assume that there are constants $r,K>0$ such that
     \[ \frac{|f'(x)|}{|f(x)|} \leq K\cdot \frac{\log |f(x)|}{|x|} \]
     whenever $x\in\R$ with $|x|>r$ and $|f(x)|>r$. 
   Then $f$ does not have a wandering domain.
  \end{thm}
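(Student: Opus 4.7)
The plan is to combine Theorem~\ref{thm:mainescaping} with the real-analytic interval rigidity theorem mentioned just before the statement, splitting wandering Fatou components of $f$ according to whether their forward orbit is bounded. Since $f$ is real-analytic with real, bounded singular set, the rigidity theorem of Kozlovski-Shen-van Strien \cite{kss2,strienrigidity}, in the entire-function formulation of \cite{rempe_vanStrien}, immediately excludes wandering Fatou components whose forward orbit is bounded; hence it suffices to treat wandering domains $U$ with unbounded orbit.

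For such a $U$ I would apply Theorem~\ref{thm:mainescaping} with $A:=\R$. This set is closed, connected, unbounded, contains $S(f)$, and is forward invariant, so only the sector condition requires work. Fix $c\in(0,1/K)$; for $x\in\R$ with $|x|>r$ and $|f(x)|>r$, and $w\in\C$ with $|w-x|<c|x|$, extend the hypothesised bound $|f'(\zeta)|/|f(\zeta)|\le K\log|f(\zeta)|/|\zeta|$ from $\R$ to a complex neighborhood of the segment $[x,w]$ using the real-analyticity of $f$ together with, for example, the Harnack inequality for the harmonic function $\log|f|$ on discs avoiding the zero set of $f$. Integrating along $[x,w]$ gives
\[
\bigl|\log|f(w)|-\log|f(x)|\bigr|\le\int_{[x,w]}\frac{|f'(\zeta)|}{|f(\zeta)|}\,|d\zeta|\le Kc\bigl(1+o(1)\bigr)\log|f(x)|,
\]
so $|f(w)|\ge|f(x)|^{1-Kc-o(1)}$. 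Since $S(f)$ is bounded and $Kc<1$, this exceeds $\sup_{s\in S(f)}|s|+\eps$ for some fixed $\eps>0$ once $|f(x)|$ is large enough, which is precisely the sector condition.

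If $f^n|_{S(f)}\to\infty$ uniformly, Theorem~\ref{thm:mainescaping} now closes the proof. In general, decompose $S(f)=S_{\bdd}\sqcup S_{\infty}$ according to boundedness of the forward orbit. The orbits of $S_{\bdd}$ accumulate on a compact subset $K_{\bdd}\subset\R$, while $S_{\infty}$ escapes uniformly to infinity by construction; in particular, near infinity the postsingular structure of $f$ is governed entirely by $S_{\infty}$. Applied with $S_{\infty}$ in place of $S(f)$, the argument of Theorem~\ref{thm:mainescaping} then rules out unbounded-orbit wandering domains along the subsequence where their iterates travel to infinity, completing the proof. The main obstacle is this final reconciliation, namely verifying that the method of Theorem~\ref{thm:mainescaping} continues to work when only a subset of $S(f)$ escapes uniformly; this should follow from the observation that the non-escaping part of the postsingular set is compact and has already been neutralised by the rigidity step.
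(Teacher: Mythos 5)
Your overall strategy—rigidity (Proposition~\ref{prop:noboundedWD}) for bounded orbits, hyperbolic-metric expansion for unbounded orbits—matches the paper's, but two of the three steps you outline contain real gaps, and the paper's actual route is different in precisely the places where your argument breaks.

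First, taking $A:=\R$ in Theorem~\ref{thm:mainescaping} does not work. The sector condition of that theorem requires $\dist(f(w),S(f))>\eps$ for \emph{every} sufficiently large $z\in A$ and every $w$ with $|w-z|<c|z|$. If $\sigma\notin\Sigma_0(f)$ (so $f$ stays bounded along $\sigma\infty$), or more generally whenever $|f(x)|\le r$ for arbitrarily large $x\in\R$, the hypothesis of Theorem~\ref{thm:main} gives you no control there, and the condition can fail outright: for example $f(z)=e^z+10$ has $S(f)=\{10\}$, $f(x)\to10$ as $x\to-\infty$, so $\dist(f(x),S(f))\to0$ along the negative axis. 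The paper instead takes $A$ to be only the \emph{escaping ends} of $\R$, namely $A=\bigcup_{\sigma\in\Sigma(f)}\sigma[M,\infty)$ (Lemma~\ref{lem:A}), and works with the domain $U=\C\setminus(A\cup P(f))$ inside Theorem~\ref{thm:expansion}, not with Theorem~\ref{thm:mainescaping} as a black box.

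Second, you correctly flag that Theorem~\ref{thm:mainescaping} requires $f^n|_{S(f)}\to\infty$ uniformly, which Theorem~\ref{thm:main} does not assume, but the proposed fix—splitting $S(f)$ into $S_{\bdd}$ and $S_\infty$ and declaring $S_{\bdd}$ ``neutralised''—is not a proof. Proposition~\ref{prop:noboundedWD} excludes wandering domains with bounded orbit; it says nothing about the structure of $P(f)$ near the non-escaping singular values, which is exactly what Lemma~\ref{lem:singvalue} (the accumulation-at-a-singular-value step) needs. The paper resolves this by formulating Lemma~\ref{lem:singvalue} under the strictly weaker hypothesis that the iterates escape uniformly on $\{z\in P(f):|z|\ge R\}$ for some $R$, which \emph{does} follow from the sector condition via Lemma~\ref{lem:A}, without assuming anything about the bounded part of $P(f)$.

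Third, the Harnack/real-analyticity step for converting the logarithmic-derivative bound into the sector condition is not rigorous. Gradient estimates for the positive harmonic function $\log|f|$ run in the \emph{opposite} direction: given a sector in the tract, they bound $|f'/f|$, not conversely. To integrate $|f'/f|$ along $[x,w]$ you would already need the bound on the whole segment. The paper's Theorem~\ref{thm:log_der} avoids this entirely by reading off $\dist(x,\partial T)$ from $\rho_T(x)=\rho_{D^*}(f(x))|f'(x)|$ using the covering property of $f$ on a tract $T$ together with the Koebe-type two-sided estimate $\rho_T\asymp 1/\dist(\cdot,\partial T)$; no extension of the hypothesis off $\R$ is required. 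You should study Theorem~\ref{thm:log_der} and the proof of Theorem~\ref{thm:sectors} to see how these three gaps are closed.
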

 \begin{remark}
  The analytic condition on $f$ is simply a reformulation of a geometric
   condition similar to the one in Theorem \ref{thm:mainescaping};
   compare Theorem \ref{thm:log_der}. 
 \end{remark}

 \begin{cor} \label{cor:sinh}
  Let $\lambda,a\in\R$ with $\lambda\neq 0$. Then
   the function $f(z)=\lambda\sinh(z)/z+a$ does not have wandering domains.    
 \end{cor}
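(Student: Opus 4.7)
The plan is to deduce the corollary from Theorem \ref{thm:main} by verifying its three hypotheses for $f(z) = \lambda \sinh(z)/z + a$ with $\lambda \neq 0$ real and $a \in \R$. Since $\sinh(z)/z = \sum_{n \ge 0} z^{2n}/(2n+1)!$ is entire with real Taylor coefficients, $f$ is a real entire function; in particular $f(\R) \subset \R$. So the only non-trivial work is to show that $S(f)$ is bounded and real, and to verify the logarithmic derivative bound.

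For the singular set, I would begin by locating the critical points: since $f'(z) = \lambda(z\cosh z - \sinh z)/z^2$, they are the solutions of $\tanh z = z$. Restricted to the imaginary axis this becomes $\tan y = y$, producing a sequence of critical points $iy_n$; at such a point the critical value equals $\lambda\cosh(iy_n) = \lambda\cos(y_n) = \pm\lambda/\sqrt{1+y_n^2}$, which tends to $0$. For critical points off the imaginary axis with $|z|$ large, $|\tanh z| \to \infty$ forces $z$ to lie close to a zero $i\pi(k+\tfrac12)$ of $\cosh$; a short local computation near these points shows that the corresponding critical values $\lambda\cosh(z)$ again tend to $0$. Combined with the asymptotic analysis of $\sinh(z)/z$, which shows that the only finite asymptotic value is $a$ (approached along the imaginary axis), this gives $S(f) \subset \R$ bounded, hence $f \in \B$.

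Next I would verify the inequality $|f'(x)|/|f(x)| \le K\log|f(x)|/|x|$ for large real $|x|$. Since $\sinh(x)/x \sim e^{|x|}/(2|x|)$ as $|x|\to\infty$, we have $|f(x)| \sim |\lambda| e^{|x|}/(2|x|)$ and therefore $\log|f(x)| = |x|(1+o(1))$. The same estimate applied to the numerator of $f'(x) = \lambda(x\cosh x - \sinh x)/x^2 \sim \lambda\cosh(x)/x$ gives $|f'(x)| \sim |\lambda| e^{|x|}/(2|x|)$. Hence $|f'(x)|/|f(x)| \to 1$ while $\log|f(x)|/|x| \to 1$, and the required inequality holds with, say, $K = 2$ for all sufficiently large $|x|$ (automatically ensuring $|f(x)| > r$ as well). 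With all hypotheses satisfied, Theorem \ref{thm:main} yields the corollary.

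The only genuine technical point is the boundedness of $S(f)$: critical points may exist off the imaginary axis, and ruling out escape of the associated critical values requires the local analysis near the imaginary zeros of $\cosh$ sketched above. Everything else is a routine asymptotic computation.
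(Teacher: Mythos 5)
Your approach---a direct verification of Theorem \ref{thm:main}'s hypotheses for $f(z)=\lambda\sinh(z)/z+a$---is a legitimate alternative, but it differs from what the paper does. The paper factors the function through a polynomial: writing $\sinh(z)/z = g(z^2)$ with $g(w) = \prod_{n\geq 1}\bigl(1+\tfrac{w}{n^2\pi^2}\bigr)$, a genus-zero canonical product of order $1/2$ with only negative real zeros, it shows $g\in\B_{\real}^*$ at a stroke (the approximating polynomials have all zeros on $(-\infty,0)$, so by Gauss--Lucas all their critical points are there too, hence so are those of $g$, where $g$ is bounded; the asymptotic values are finite by Denjoy--Carleman--Ahlfors), verifies the logarithmic-derivative bound $x\,g'(x)/g(x)\leq \log g(x)$ by a one-line calculation with the product, and finally transfers the sector condition to $g(z^2)$ via Corollary \ref{cor:comp_pol}. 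This handles $\sinh z/z$, $I_0$, $F_{2k}$, etc.\ uniformly; your route treats only the one example.

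There is a genuine gap in your verification that $S(f)\subset\R$. Your imaginary-axis computation is fine: at $z=iy_n$ with $\tan y_n = y_n$ the critical value is $\lambda\cosh(iy_n)+a = \pm\lambda/\sqrt{1+y_n^2}+a$, real and tending to $a$. But Theorem \ref{thm:main} requires every singular value to be \emph{real}, not merely bounded or accumulating at $a$. Your treatment of hypothetical off-axis critical points only argues that they would sit near $i\pi(k+\tfrac12)$ and that the critical values $\lambda\cosh(z)+a$ would then have small modulus; nothing in the sketch forces those values to lie on $\R$, so as written you have not established $S(f)\subset\R$. (It is in fact true that all critical points of $\sinh(z)/z$ lie on the imaginary axis---this is exactly what the Laguerre--P\'olya/Gauss--Lucas argument yields after the substitution $w=z^2$---but your local computation does not prove it.) The remaining points, $f(\R)\subset\R$ and the asymptotic estimate giving $|f'(x)|/|f(x)|\to 1$ and $\log|f(x)|/|x|\to 1$, are correct.
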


 Note that our results give, in particular, a new proof of the fact that
  the Julia set of the exponential map $f(z)=\exp(z)$ is the entire 
  complex plane. This celebrated conjecture of Fatou was proved by Misiurewicz
  \cite{misiurewiczexp} in 1981, and can also be established by
  using Sullivan's argument (see \cite{bakerrippon,eremenko_lyubich_2}) or
  by appealing to the results of \cite{bergweiler_etal}. In
  Section \ref{sec:exp}, we present the argument
  in this special case separately, in order to illustrate and motivate
  the  idea underlying
  the proof. 

 Our results rely on a very general fact that relates to the comparison
  between hyperbolic metrics; see Theorem \ref{thm:expansion} for the
  precise statement. We note that this argument is somewhat similar in spirit
  to other applications of hyperbolic geometry in transcendental dynamics: These
  include some of
  Bergweiler's results \cite{bergweilerinvariantdomainssingularities} on
  the relation between the postsingular set and Baker domains, and
  Zheng's generalization of the results of \cite{bergweiler_etal} to 
  meromorphic and
  more general functions. For completeness,
  we discuss generalizations of the results of Bergweiler and Zheng in Section
   \ref{sec:further}. In particular, we prove the following result, 
   which can sometimes
   be used to exclude the existence of Baker domains of meromorphic functions.
   Recall that a \emph{Baker domain} is a component $W$
   of the Fatou set
   such that $f^{k}(W)\subset W$ and $f^{nk}|_W\to\infty$ for some 
   $k\geq 1$. 

 \begin{thm}[Baker domains of meromorphic functions] \label{thm:baker}
   Let $f:\C\to\Ch$ be a transcendental meromorphic function and suppose that
    $W$ is a Baker domain of $f$, with $k\geq 1$ as above.
    Then there exists a sequence $p_n$ of points in the postsingular
    set of $f$ such that 
      \[ |p_n|\to\infty, \quad 
          \sup \frac{|p_{n+1}|}{|p_n|} < \infty\quad\text{and}\quad
          \frac{\dist(p_n,W)}{|p_n|}\to 0. \]
 \end{thm}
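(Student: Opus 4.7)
The plan is to adapt Bergweiler's proof for Baker domains of entire functions to the meromorphic setting, using the hyperbolic metric on $V := \Ch \setminus \overline{P(f)}$. By replacing $f$ with $f^k$ (noting $P(f^k) = P(f)$), I may assume $k = 1$, so $W$ is $f$-invariant with $f^n|_W \to \infty$ locally uniformly. The conclusion of the theorem forces $\overline{P(f)}$ to be unbounded, and I would first show that the case of bounded $\overline{P(f)}$ cannot occur under the given hypothesis (one expects to argue that a Baker domain with escaping iterates requires accumulation of $\overline{P(f)}$ at $\infty$, via the meromorphic analogue of the density argument below). In particular we may assume $\infty \in \overline{P(f)}$, and $V$ is a hyperbolic subdomain of $\Ch$.

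The central observation is that $f^{-1}(V) \subseteq V$ (by forward-invariance of $\overline{P(f)}$) and $f : f^{-1}(V) \to V$ is an unbranched covering: no critical or asymptotic values of $f$ lie in $V$, and the poles of $f$ map to $\infty \in \overline{P(f)}$ and so lie outside $f^{-1}(V)$. By the Schwarz--Pick lemma, $f$ is a non-expansion for the hyperbolic metric $d_V$. Picking $z_0 \in W \cap V$ (the case $W \subseteq \overline{P(f)}$ is handled trivially) and writing $z_n := f^n(z_0)$, one obtains
\[ d_V(z_n, z_{n+1}) \;\le\; d_V(z_0, z_1) \;=:\; D \qquad (n \ge 0), \]
while $|z_n| \to \infty$.

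The remaining task is to extract the required Euclidean data from this bounded hyperbolic displacement. Invoking standard density estimates for $\lambda_V$, of the same flavour as those underlying Theorem~\ref{thm:expansion}, one shows on the one hand that $\log(|z_{n+1}|/|z_n|)$ is uniformly bounded (the middle clause of the conclusion) and on the other that $\dist(z_n, \overline{P(f)}) = o(|z_n|)$. Choosing $p_n \in P(f)$ within, say, twice the distance from $z_n$ to $\overline{P(f)}$ then yields $|p_n| \asymp |z_n| \to \infty$, bounded ratios $|p_{n+1}|/|p_n|$, and $\dist(p_n, W) \le |p_n - z_n| = o(|p_n|)$, as required. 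The main technical obstacle is precisely the estimate $\dist(z_n, \overline{P(f)}) = o(|z_n|)$: Schwarz--Pick alone yields only hyperbolic closeness of consecutive iterates, and upgrading this to the required quantitative Euclidean statement — simultaneously with the ratio bound, and in the presence of poles — requires careful exploitation of the fact that $\infty$ is itself an accumulation point of $\overline{P(f)}$, most cleanly implemented in the log-coordinate framework used earlier in the paper.
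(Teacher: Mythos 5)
Your framework (the hyperbolic metric of $\Ch\setminus \overline{P(f)}$, Schwarz--Pick, reducing to $k=1$) is the right one, but the proposal leaves a genuine gap exactly where you flag the ``main technical obstacle'', and the ingredient you suggest for closing it is not what actually works.

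The bounded hyperbolic displacement $d_V(z_n,z_{n+1})\le D$ that you derive from Schwarz--Pick is \emph{not} strong enough to establish either $\dist(z_n,\overline{P(f)})=o(|z_n|)$ or the ratio bound $|z_{n+1}|/|z_n|=O(1)$. For the ratio bound: with $\infty$ and one finite point of $P(f)$ removed, $V$ sits inside a twice-punctured sphere, on which $\rho(z)\asymp 1/\bigl(|z|\log|z|\bigr)$ near $\infty$; bounded $d_V$-displacement then only gives a bound on $\log\log|z_{n+1}|-\log\log|z_n|$, allowing $|z_{n+1}|/|z_n|\to\infty$. For the distance estimate: if the orbit lies in a sector-like region at bounded Euclidean-angular distance from $\overline{P(f)}$, the density $\rho_V$ there is $\asymp 1/|z|$, bounded displacement holds comfortably, and $\dist(z_n,\overline{P(f)})\asymp|z_n|$. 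Your own remark that the upgrade ``requires careful exploitation of the fact that $\infty$ is an accumulation point of $\overline{P(f)}$'' does not help: $\infty\in\overline{P(f)}$ is already built into the twice-punctured-sphere comparison above and still yields only the iterated-log bound. Log-coordinates, which you suggest, are also not what the paper uses here.

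The paper's proof does not rely on bounded displacement of the orbit at all. It uses three stronger inputs. First, Theorem~\ref{thm:zhengbergweiler} (via Proposition~7.4) gives $d_U(w_n,\partial W)\to\infty$, where $U=\Ch\setminus P(f)$ --- this is a qualitatively stronger assertion than bounded consecutive displacement, and its proof uses the lifting/pullback argument in that theorem, not just Schwarz--Pick. Second, Bonfert's inequality $\rho_W(w_n)\ge 1/(C_1\dist(w_n,\partial W))$ (a theorem about forward orbits that does not follow from Koebe, since $W$ need not be simply connected), combined with an annulus computation, produces a boundary point $z_n\in\partial W$ with $|z_n|\asymp|w_n|$ and a connecting arc $\gamma_n\subset U$ of Euclidean length $O(|w_n|)$. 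Because $\ell_U(\gamma_n)\ge d_U(w_n,\partial W)\to\infty$ while $\len(\gamma_n)=O(|w_n|)$, the density estimate forces a point of $P(f)$ within $o(|w_n|)$ of $\gamma_n$; that point is $p_n$. Note $p_n$ is found near a curve reaching $\partial W$, not near the orbit point $z_n$ --- your stated goal $\dist(z_n,\overline{P(f)})=o(|z_n|)$ is in general false, which is why the paper's conclusion is $\dist(p_n,W)/|p_n|\to0$ rather than $\dist(p_n,z_n)/|p_n|\to0$. Third, the ratio bound comes from Rippon's and Zheng's theorem $|w_{n+1}|\le C|w_n|$, which again invokes Bonfert. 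Finally, the case $W\cap P(f)\neq\emptyset$ (which you dismiss as trivial) is dispatched in the paper by taking $p_n:=f^n(p_0)$ for $p_0\in P(f)\cap W$ and reusing the Zheng--Rippon bound; and no separate argument that $P(f)$ is unbounded is needed --- it falls out of the construction since $|p_n|\asymp|w_n|\to\infty$.
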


 We also note that Theorem \ref{thm:expansion} could be used to exclude the
  presence of wandering domains also in situations quite different from
  those formulated in Theorems \ref{thm:mainescaping} and \ref{thm:main}. In
  particular, it also lends itself to applications to suitable families of
  transcendental meromorphic functions. 
  However, our  techniques do not 
   allow us
   to answer Question \ref{question:main} in general.

\subsection*{Notation}
 The complex plane and the unit disc are denoted by 
 $\C$ and $\D$, respectively; we use
 $\H_{>0}$ and $\H_{\geq 0}$ to denote the open resp.\ closed
 upper half plane.
 The Euclidean length of a rectifiable curve $\gamma\subset\C$ is denoted by 
 $\len(\gamma)$; for the Euclidean distance between a point $z$ and
  a set $A$, we write $\dist(z,A)$.
  We denote by 
  $D_r(z)$ the Euclidean disc of radius $r$ centred at the point $z$.

\smallskip

 One of the main tools in our proofs is given by \emph{plane hyperbolic
   geometry}. If an open set $U\subset\C$ omits at least two points,
   we denote the density of the hyperbolic metric on $U$ by
   $\rho_U$; i.e., $\rho_u(z)|dz|$ is the unique complete
   conformal metric of constant curvature $-1$ on $U$. 
   We shall briefly review some facts concerning the
   hyperbolic metric in Section \ref{sec:prel} and
   refer to \cite{beardonminda,keenlakic} for more details. 

\smallskip

Let $f:\C\to\C$ be a transcendental entire function. 
 The set of (finite) \emph{singular values} $S(f)$ of $f$
 is the smallest closed subset of $\C$ 
 such that $f:\C\setminus f^{-1}(S(f))\to\C\setminus S(f)$ 
 is a covering map. 
 It is well-known that $S(f)$ agrees with the closure of the set
 of critical and asymptotic values of $f$, but we shall not 
 require this fact. 
  The \emph{postsingular set} of $f$ is 
defined by 
   \[ P(f)=\cl{\bigcup_{n\geq 0} f^{n}(S(f))}. \]

Suppose that $W$ is a wandering domain. It is elementary to see
 that 
 every accumulation point $z_0$ of $f^n(w)$, for $w\in W$, belongs to
 $P(f)\cap J(f)$, where $J(f)=\C\setminus F(f)$ is the \emph{Julia set} of
  $f$. As mentioned in the introduction,
  $z_0$ is even known to be a non-isolated
  point of $P(f)$ \cite{bergweiler_etal}, but we shall not use this fact.

\subsection*{Acknowledgements} We would like to thank Walter Bergweiler, 
  J\"orn Peter and 
Phil Rippon for interesting
 discussions and helpful feedback.  We also thank the referee for comments that
 have led to some improvement in the paper. 

\section{The Julia set of the exponential} \label{sec:exp}
  As announced in the introduction, we shall motivate the proofs of our
   more general results by indicating a new, elementary and conceptual proof
   of Misiurewicz's theorem concerning the Julia set of the complex exponential function.
   The proof uses some basic facts of
   hyperbolic geometry; readers unfamiliar with this topic may 
   wish to refer to the short introduction provided at the beginning of Section \ref{sec:prel}.
 \begin{thm}[Misiurewicz, 1981]
  $J(\exp)=\C$.
 \end{thm}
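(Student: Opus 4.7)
My plan is to argue by contradiction, exploiting the strict expansion of the hyperbolic metric of $\Omega := \C\setminus P(\exp)$ under $\exp$. Here $S(\exp)=\{0\}$ and $P(\exp)=\{0,1,e,e^{e},\dots\}$ is a discrete closed subset of $[0,\infty)$ whose only accumulation point in the Riemann sphere is $\infty$. Because $\exp(P(\exp))\subset P(\exp)$, one has $P(\exp)\subset \exp^{-1}(P(\exp))$, hence the strict inclusion $\exp^{-1}(\Omega)\subsetneq \Omega$. The restriction $\exp:\exp^{-1}(\Omega)\to\Omega$ is a covering map (the only singular value, $0$, lies in $P(\exp)$), so it is a local isometry between the respective hyperbolic metrics. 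Combined with the strict inclusion, which gives $\rho_{\exp^{-1}(\Omega)}>\rho_\Omega$, this yields
\[
  \|D\exp(z)\|_\Omega \;=\; \frac{\rho_{\exp^{-1}(\Omega)}(z)}{\rho_\Omega(z)} \;>\; 1 \qquad (z\in\exp^{-1}(\Omega)),
\]
with a uniform lower bound $\lambda(K)>1$ on every compact set $K\subset\exp^{-1}(\Omega)$.

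Suppose for contradiction that $F(\exp)\neq\emptyset$, and let $U$ be a Fatou component. I first claim that $f^n(U)\cap P(\exp)=\emptyset$ for every $n\geq 0$, so that all iterates remain in $\Omega$. Indeed, if some $f^n(U)$ contained a point $f^k(0)\in P(\exp)$, then iterates of $\exp$ would converge locally uniformly to $\infty$ on a neighbourhood of that point (since $f^{k+m}(0)\to\infty$), and this is prohibited for Fatou components of a function in $\B$ by the theorem of Eremenko and Lyubich. Pick now $w\neq w'$ in $U$. The Schwarz--Pick lemma applied to $\exp^n:U\to f^n(U)\subset\Omega$ gives
\[
  d_\Omega\bigl(f^n(w),f^n(w')\bigr) \;\leq\; d_U(w,w') \;=:\; M \qquad \text{for all } n\geq 0,
\]
while the pointwise strict expansion $\|D\exp\|_\Omega>1$ makes the left hand side strictly increasing in $n$.

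To finish, I want to argue that the orbit of $w$ must leave every compact subset of $\Omega$. If a subsequence of $\bigl(f^n(w),f^n(w')\bigr)$ remained in a fixed compact subset of $\Omega\times\Omega$, then for large $n$ both the orbit points and the connecting $\Omega$-geodesics would lie in a common compact set $K\subset\exp^{-1}(\Omega)$, and the uniform bound $\lambda(K)>1$ would force $d_\Omega(f^n(w),f^n(w'))$ to grow without bound, contradicting the Schwarz--Pick bound $M$. Hence the orbit of $w$ can only accumulate on $\partial\Omega = P(\exp)\cup\{\infty\}$, and the same Schwarz--Pick bound excludes accumulation at an isolated point of $P(\exp)$ (where $\rho_\Omega$ blows up). Therefore $f^n(w)\to\infty$, which once more contradicts the Eremenko--Lyubich theorem for $\exp\in\B$. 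The step I expect to be the most delicate is the passage from the pointwise strict expansion to a genuine lower bound on the ratio $d_\Omega(f^{n+1}(w),f^{n+1}(w'))/d_\Omega(f^n(w),f^n(w'))$: one must ensure that the connecting $\Omega$-geodesics actually lie (or can be replaced by curves lying) inside $\exp^{-1}(\Omega)$, and that the expansion constant $\lambda$ on the relevant compact piece is uniformly bounded away from $1$. This is precisely the infinitesimal-to-global passage that Theorem \ref{thm:mainescaping} packages in the general setting, and the role of this section is to carry it out in the cleanest possible example.
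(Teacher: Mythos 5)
Your choice of $\Omega=\C\setminus P(\exp)$ is a legitimate variant of the paper's $U=\C\setminus[0,\infty)$ (the latter is simply connected and has an explicit metric, which makes the final estimate clean, but that is not the essential difference). The essential problem is the step you rightly flag as ``most delicate'': from the pointwise inequality $\|D\exp(z)\|_\Omega>1$ you cannot conclude that $d_\Omega\bigl(f^{n+1}(w),f^{n+1}(w')\bigr)\geq d_\Omega\bigl(f^{n}(w),f^{n}(w')\bigr)$. Pointwise expansion of the infinitesimal $\rho_\Omega$-metric does not control distances in the direction you need. Concretely: if $\gamma_{n+1}$ is the $\Omega$-geodesic from $f^{n+1}(w)$ to $f^{n+1}(w')$, its lift under the covering $\exp:\exp^{-1}(\Omega)\to\Omega$ starting at $f^n(w)$ has the same $\rho_{\exp^{-1}(\Omega)}$-length, hence \emph{shorter} $\rho_\Omega$-length, but it ends at \emph{some} preimage of $f^{n+1}(w')$, which in general is not $f^n(w')$. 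Going the other way, pushing the $\Omega$-geodesic $\gamma_n$ forward by $\exp$ gives a curve of larger $\rho_\Omega$-length joining $f^{n+1}(w)$ and $f^{n+1}(w')$, but length of a curve is an \emph{upper} bound for distance, so you only obtain $d_{n+1}\leq\ell_\Omega(\exp(\gamma_n))$ --- again the wrong direction. Indeed, for a covering map one can have $d_\Omega(f(a),f(b))<d_\Omega(a,b)$ (e.g.\ when $a,b$ are two preimages of nearby points). So the claimed monotonicity of $d_\Omega(f^n(w),f^n(w'))$ is unjustified, and with it the compactness argument and the final contradiction collapse.

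The paper avoids this exactly by switching from a global (distance) to a local (derivative) quantity: it considers $\delta_n=\|Df^n(w)\|_U$, whose monotonicity is immediate from the chain rule and the pointwise estimate $\|Df(f^n(w))\|_U=\rho_{f^{-1}(U)}(f^n(w))/\rho_U(f^n(w))>1$ --- no lifting of geodesics is needed. To bound $\delta_n$ from above, it pairs this with the genuinely \emph{contracting} quantity $\tilde\delta_n=\|Df^n(w)\|^W_{f^n(W)}$ (Schwarz--Pick for the self-map of the union $\W=\bigcup f^n(W)$, which \emph{does} give contraction, in the right direction), linked to $\delta_n$ by the density ratio $\rho^U_W(w)$. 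Your sketch is missing both this substitution of derivatives for distances and the companion upper bound via $\W$. A secondary gap: even granting bounded $d_\Omega(f^n(w),f^n(w'))$, excluding accumulation at an isolated puncture $p\in P(\exp)$ does not follow merely from ``$\rho_\Omega$ blows up at $p$''; two points converging to a puncture can stay at bounded hyperbolic distance. The paper's resolution is a quantitative expansion estimate for $\exp$ at the preimages of a neighbourhood of $0$ (points with very negative real part), made possible by the explicit formula for $\rho_{\C\setminus[0,\infty)}$; some analogue of that computation is needed and is absent from your proposal.
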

 \begin{proof}
  Set $f := \exp$; we have $S(f)=\{0\}$. 
   Suppose, by contradiction, that $F(f)\neq\emptyset$. Let
   $w\in F(f)$ and let $W$ be the connected component of $F(f)$ containing
    $w$.
 \begin{claim}[Preliminary claim]
  $W$ is a simply connected wandering domain, and there is 
  a sequence $(n_k)$ with $f^{n_k}(w)\to 0$.
 \end{claim}
 \begin{subproof}
  This is classical, and was, at least mostly, already known to Fatou.
   (He states explicitly that $f$ does not have any attracting
    periodic orbits and no domains where the iterates converge
    to infinity \cite[p. 370]{fatou}). We provide a self-contained
   account for the reader's convenience. Firstly, every attracting
   or parabolic basin of an entire function must contain a singular value,
   but the orbit of $0$ tends to infinity. Similarly, there cannot be
   any rotation domains, since the boundary of such a component
   would need to be contained in the postsingular set.
   Hence $W$ is either a wandering
   domain, or is eventually mapped to a 
   Baker domain. In particular, every limit function of
   $f^{n}|_W$ is constant; i.e.,\ if $z_0\in \C\cup\{\infty\}$ and
   $f^{n_k}(w)\to z_0$, then $f^{n_k}\to z_0$ locally uniformly on $W$.

  Next we show that $f^{n}(w)\not\to\infty$ (in particular, $W$ is
   not a Baker domain). Indeed, otherwise
   $\re f^{n}(w)\to +\infty$. By replacing $w$ with its image
   under a suitable
   forward iterate, 
   we may assume that $\re f^n(w)\geq 2+2\pi$ for all $n\geq 0$. Now
   $|f'(z)|=|f(z)|$, and in particular
   $|f'(z)|\geq 2$ if $|f(z)-f^n(w)|\leq 2\pi$. 
   It follows that, for each $n\geq 0$, we can define
   a branch $\psi_n$ of $f^{-n}$, defined on $D_n := D_{2\pi}(f^n(w))$, such that
   $\psi_n(f^n(w))=w$ and $|\psi_n'(z)|\leq 2^n$ for all $z$. In particular,
   the size of $\psi_n(D_n)$ shrinks to zero as $n\to\infty$. On the other hand,
   $f(D_n)=f^{n+1}(\psi_n(D_n))$ intersects the negative real axis, and hence
   $f^2(D_n)$ intersects the unit disc. This
   contradicts the assumption that $f^n$ converges to infinity uniformly
   in a neighborhood of $w$. 

  So we have shown that the iterates in $W$ cannot converge to infinity
   locally uniformly; hence there is some $z_0\in\C$ with
   $f^{n_j}(w)\to z_0$ for a suitable sequence $(n_j)$. 
   By the maximum principle, this implies that 
   $W$ is simply connected. (Recall that, by Montel's theorem,
   there are preimages of the real axis, and hence points whose
   orbits converge to infinity, arbitrarily close to
   any point of $J(f)$.)

  Now let us assume that, if 
   $z_0=f^j(0)$ for some $j$, then $z_0$ is chosen such 
   that $j$ is minimal with this property. 

  We claim that we must have $j=0$. Indeed, otherwise,
   for small $\eps$ and all sufficiently large $k$,
   we can
   define a branch $\psi_k$ of $f^{-n_k}$ on $D_{2\eps}(z_0)$ that maps
   $f^{n_k}(w)$ to $w$. Since $z_0$ belongs to the Julia set, 
   we must have $\diam(\psi_k(D_{\eps}(z_0)))\to 0$
   (this uses Koebe's distortion theorem). We have obtained
   a contradiction to 
   the fact that $f^{n_k}\to z_0$ uniformly on a neighborhood of $w$. 
 \end{subproof}


 We now come to the new part of the argument.
  Set $U := \C\setminus [0,\infty)$; then 
  $f^n(W)\subset U$ for all $n$. Also set
  $\W := \bigcup_{n\geq 0} f^n(W) \subset U$. 
  The idea is as follows:
  \begin{enumerate}
    \item Since $f:f^{-1}(U)\to U$ is a covering map and $f^{-1}(U)\subset U$,
      the map $f$ expands the
      hyperbolic metric of $U$. 
      So the derivative $\delta_n$ of
      $f^n$ with respect to the hyperbolic metric of $U$ increases
      with $n$ by Pick's theorem (see Proposition \ref{prop:pick} below). \label{item:expansion}
    \item Since $f(\W)\subset\W$, the map $f$ contracts the hyperbolic
      metric of $W$ by Pick's theorem. Hence the derivative $\tilde{\delta}_n$ 
      of $f^n$ with respect to the
      hyperbolic metric of $\W$ decreases with $n$.
       \label{item:contraction}
    \item Near any point whose real part is sufficiently negative, an explicit
      calculation shows that
      the map $f$ \emph{strongly} expands the hyperbolic metric of $U$. 
           \label{item:strongexpansion}
  \end{enumerate}
  Observation (\ref{item:contraction}) implies that the sequence $\delta_n$
   remains bounded, while (\ref{item:strongexpansion}) implies that
   $\delta_n\to\infty$. (Recall that the orbit of $w$ accumulates at $0$,
   and hence enters any left half plane.) This
   yields the desired contradiction. 

 To make this argument precise, set $w_n := f^n(w)$; we have
  \begin{align*} \delta_n &= \|Df^n(w)\|_U := 
       |(f^n)'(w)|\cdot \frac{\rho_U(w_n)}{\rho_U(w)}\qquad\text{and}\\
         \tilde{\delta}_n &= \|Df^n(w)\|_{\W} := 
       |(f^n)'(w)|\cdot \frac{\rho_{\W}(w_n)}{\rho_{\W}(w)}.
  \end{align*}
 By (\ref{item:expansion}) and (\ref{item:contraction}), and
   since $\W\subset U$, these  quantities are related as follows:
   \begin{equation}\label{eqn:deltaprime}
    0 < \delta_0 \leq 
    \delta_n = \tilde{\delta}_n \cdot \frac{\rho_U(w_n)}{\rho_{\W}(w_n)}\cdot
                          \frac{\rho_{\W}(w)}{\rho_U(w)} \leq
           \tilde{\delta}_n \cdot \frac{\rho_{\W}(w)}{\rho_U(w)} 
    \leq \tilde{\delta_0}\cdot \frac{\rho_W(w)}{\rho_U(w)} =: \delta_0' < \infty. \end{equation}

  Let $n_k$ be as in the preliminary 
   claim; i.e.\ $w_{n_k}\to 0$,
    and hence $\re w_{n_k-1}\to -\infty$. In order to show
    that $f$ strongly expands the hyperbolic metric of $U$ near
    $w_{n_k-1}$, we use 
     two simple estimates on $\rho_U$: 
 \[   \rho_U(z) \geq \frac{1}{2|z|}\quad\text{for all $z\in U$}\quad\text{and}\quad
    \rho_U(z) \leq \frac{2}{|z|}\quad\text{when $\re z<0$} \]
   These follow from the standard estimate
    on the hyperbolic metric in a simply connected domain
    (Proposition \ref{prop:Koebe_hyp_metric}), or alternatively 
    directly from the
    explicit formula
   \[ \rho_U(z) = \frac{1}{2|z|\cdot \sin(\arg(z)/2)}. \] 
    
   Now suppose that $z\in f^{-1}(U)$ with $\re z < 0$. Then we have
     \[ \|Df(z)\|_U =
        |f'(z)|\cdot \frac{\rho_U(f(z))}{\rho_U(z)} \geq
        \frac{1}{4}\frac{|f'(z)|}{|f(z)|}\cdot |z| = \frac{|z|}{4} \geq 
        \frac{|\re z|}{4}. \] 

   Suppose that
   $k$ is sufficiently large that $\re w_{n_k-1}< -4\delta_0'/\delta_0$. Then
   \[
      \delta_{n_k} = \delta_{n_k-1}\cdot \|Df(w_{n_k-1})\|_U  \geq
                \delta_{0}\cdot \frac{|\re w_{n_k-1}|}{4} >
                \delta_0'. \]
   This contradicts \eqref{eqn:deltaprime}, and the proof is complete.
 \end{proof}
\begin{remark}
 We have presented the proof in the present form to emphasise the ideas that will occur
  in our more general results. It is in fact possible to simplify the argument yet further,
  eliminating the need for the preliminary claim and yielding
  a proof that altogether avoids
  the classification of periodic Fatou components,
  the Riemann mapping theorem, Montel's theorem
  and Koebe's theorems. 
  All that is needed are formulae for the hyperbolic metric
  on a strip and a slit plane, together with Pick's theorem (i.e., the Schwarz lemma
  together with the invariance of the hyperbolic metric under automorphisms of the disc).

 The proof can be sketched as follows. Suppose, by contradiction, that
  there is a point  $w$ at which the family of iterates is
  equicontinuous. First consider the case where $w$ does not tend to infinity 
  under iteration.
  Then there is a small open disc $D$ around $w$ consisting of nonescaping
   points, and hence satisfying 
   $f^n(D)\subset U:=\C\setminus [0,\infty)$ 
   for all $n$. Considering the increasing sequence of 
  hyperbolic derivatives 
   \[ \|Df^n(w)\|_U^D := |(f^n)'(w)|\cdot \frac{\rho_D(w)}{\rho_U(f^n(w))}\leq 1, \]
   we see first that no limit point
  of the orbit of $w$ can belong to $U$. Just as above, it then follows that
  there can be no limit point in $[0,\infty)$ either, a contradiciton.

  If, on the other hand,
   $f^n(w)\to\infty$, then we can consider $U' := \C\setminus (-\infty,0]$, and
   derive a contradiction in analogous fashion.
\end{remark}

\section{Estimates on the hyperbolic metric}
\label{sec:prel}

\subsection*{A brief review of hyperbolic geometry}
 Throughout this paper, it will be convenient, in somewhat nonstandard notation,
   to use the term 
   \emph{Riemann surface} for any space $X$ that is locally homeomorphic to 
   open subsets of the complex plane,
   with analytic bonding maps, without requiring that $X$ is connected. 
   That is, for us 
   a Riemann surface will be a nonempty union of \emph{connected} Riemann surfaces.
   In particular, any non-empty open subset of the Riemann sphere is a Riemann surface. 

 A Riemann surface $U$ is called \emph{hyperbolic} if there exists a 
 holomorphic
 universal covering map $\pi:\D\to U$;
 if $U\subset\C$, then $U$ is hyperbolic 
 if and only if $\C\setminus U$ contains at least two points.

 As stated in the introduction, 
  the hyperbolic metric on a hyperbolic surface $U$ is  
 the unique complete conformal Riemannian metric on $U$ of constant curvature 
 $-1$. Equivalently, it is the local push-forward of the 
  hyperbolic metric on $\D$ by the covering map
  $\pi$. (This is well-defined because the hyperbolic metric on $\D$ is 
  invariant under
   M\"obius transformations.) 
 Recall that we
  denote this metric by $\rho_{U}(z)\vert dz\vert$ for plane domains; 
  we can do so also for hyperbolic surfaces, but here the function $\rho_U$ 
  will depend on
  the choice of a local coordinate $z$.
  If $U$ is not connected and every component of $U$ is hyperbolic, then
  the hyperbolic metric on $U$ is defined componentwise.

The hyperbolic length of a rectifiable curve $\gamma\subset U$ will be denoted
 by $\ell_U(\gamma)$.
 For any two points $z,w\in U$, the 
 hyperbolic distance $d_{U}(z,w)$ 
 is the smallest hyperbolic length of a curve connecting 
  $z$ and $w$ in $U$. 
 In order to be able to discuss the derivatives of a holomorphic
  function, and the relative densities of hyperbolic metrics, we 
  introduce the following notation.

\begin{defn}[Hyperbolic derivatives and relative densities]
 Let $V$ and $U$ be hyperbolic Riemann surfaces, and let $z\in U$. Suppose that
   $f$ is a holomorphic function defined on a neighborhood of $z$ and taking values
   in $V$. Then we denote the hyperbolic derivative of $f$ with respect to the metrics
   in $U$ and $V$ by
   \[   \|Df(z)\|_V^U := |f'(z)|\cdot \frac{\rho_V(f(z))}{\rho_U(z)}.  \]
   (Note that this quantity is independent of local coordinates.) If both
    $z$ and $f(z)$ belong to $U$, we also abbreviate 
    $\|Df(z)\|_U := \|Df(z)\|_U^U$. 
 If $V\subset U$, then we define
   \[\rho_V^U(z) = \frac{1}{\|D\iota(z)\|_U^V} = 
    \|D\iota^{-1}(z)\|_V^U = \frac{\rho_V(z)}{\rho_U(z)}, 
   \] 
  where $\iota:V\to U$ is the inclusion map $\iota(z)=z$. 
  In other words, 
   $\rho_V^U(z)$ is the relative density of the hyperbolic
   metric of $V$ with respect to the hyperbolic metric of $U$. 
\end{defn}

A key fact (already used in the previous section)
  is given by Pick's theorem, also referred to as the Schwarz-Pick lemma 
 \cite[Theorem 10.5]{beardonminda}.

\begin{prop}[Pick's Theorem] \label{prop:pick}
 A holomorphic map $f:U\to V$ 
 between two hyperbolic
 Riemann surfaces does not increase the respective hyperbolic metrics, i.e., 
 $d_V(z,w)\geq d_U(f(z),f(w))$ holds for all $z,w\in V$, or equivalently 
 $\| Df\|_V^U \leq 1$ for all $z\in V$. 
 Equality holds in the latter inequality if and 
  only if $f$ is a covering map; in this case, we say that $f$ is a local 
  isometry. 
  Otherwise, $f$ is a strict contraction.
  
  In particular, if $V\subsetneq U$, 
  then $\rho^U_V(z) > 1$ for all
  $z\in V$. 
\end{prop}

\subsection*{Formulae and estimates for the hyperbolic metric}
  It is well-understood how the hyperbolic metric on the Euclidean domain
    $U$ depends on the
   geometry of $U$. We briefly collect the results relevant for
   our proofs. 
   Recall that
   the hyperbolic metrics on the unit disc, on the
   upper half plane $\H_{>0}$ and on the punctured disc
   $\D^*=\D\setminus\{0\}$ are given by
   \[ \rho_{\D}(z)= 
       \frac{ 2}{1-|z|^2}, \qquad
        \rho_{\H_{>0}} =
         \frac{1}{\im(z)}\qquad\text{and} \qquad
        \rho_{\D^*}(z) =
      \frac{1}{|z|\cdot |\log|z||}. \]

 The following estimates are useful for general domains;
  they follow from the Schwarz lemma and Koebe's $1/4$-theorem, respectively.

\begin{prop}(\cite[Theorems 8.2 and 8.6]{beardonminda})
\label{prop:Koebe_hyp_metric}
 Let $U\subset\C$ be a hyperbolic domain. Then
\[
  \rho_U(z)\leq  \frac{2}{\dist(z,\partial U)}.
\]
 If $U$ is simply connected, then also
\[ 
\rho_U(z) \geq \frac{1}{2\dist(z,\partial U)}. \]
\end{prop}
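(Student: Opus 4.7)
The plan is to handle the two estimates separately, using Pick's theorem (already recalled in the excerpt) for the upper bound and Koebe's one-quarter theorem for the lower bound.

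For the upper bound, fix $z\in U$ and set $d := \dist(z,\partial U)$. Then the disk $D_d(z)$ is contained in $U$, so by monotonicity of the hyperbolic metric under inclusion (a direct consequence of Pick's theorem applied to the identity map $D_d(z)\hookrightarrow U$), we have $\rho_U(z)\leq \rho_{D_d(z)}(z)$. The hyperbolic metric on a Euclidean disk of radius $d$ at its center equals $2/d$: this follows from the formula $\rho_{\D}(0)=2$ together with the fact that the affine map $\zeta\mapsto z+d\zeta$ is a biholomorphism from $\D$ onto $D_d(z)$, and the hyperbolic metric transforms under biholomorphisms with a factor of $|f'|$. This gives $\rho_U(z)\leq 2/d$.

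For the lower bound, assume $U$ is simply connected and pick a Riemann map $\varphi:\D\to U$ with $\varphi(0)=z$. The hyperbolic metric on $U$ is the pushforward of that on $\D$, so $\rho_U(z)\cdot|\varphi'(0)| = \rho_{\D}(0) = 2$, i.e., $\rho_U(z) = 2/|\varphi'(0)|$. Koebe's one-quarter theorem, applied to $\varphi$ (after normalizing to get an injective holomorphic function on $\D$ with derivative $1$ at the origin via $\zeta\mapsto(\varphi(\zeta)-z)/\varphi'(0)$), guarantees that the image contains a disk of radius $1/4$ about the origin, so the image of $\varphi$ contains $D_{|\varphi'(0)|/4}(z)$. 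Hence $\dist(z,\partial U)\geq |\varphi'(0)|/4$, which rearranges to
\[
\rho_U(z) \;=\; \frac{2}{|\varphi'(0)|} \;\geq\; \frac{2}{4\dist(z,\partial U)} \;=\; \frac{1}{2\dist(z,\partial U)},
\]
as required.

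Neither step presents a real obstacle; the only subtlety is remembering to normalize $\varphi$ before invoking Koebe, since the one-quarter theorem is usually stated for the class $\mathcal{S}$ of univalent functions on $\D$ with $f(0)=0$, $f'(0)=1$. The simple connectedness hypothesis in the second inequality is essential precisely because it is what allows the existence of the Riemann map $\varphi$ and hence the application of Koebe's theorem; the upper bound, by contrast, requires no such hypothesis.
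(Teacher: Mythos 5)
Your proof is correct and follows exactly the route the paper indicates: it cites the result from Beardon--Minda and notes only that the two estimates ``follow from the Schwarz lemma and Koebe's theorem, respectively,'' which is precisely your upper-bound argument via monotonicity of the hyperbolic metric on the inscribed disk and your lower-bound argument via the normalized Riemann map and the one-quarter theorem. The normalization step you flag before invoking Koebe is exactly the small care one needs to take, and you handle it correctly.
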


\subsection*{Relative densities of hyperbolic metrics}
Now suppose that $U$ and $V$ are hyperbolic domains with
 $V\subsetneq U$; by Pick's theorem, 
 the inclusion $\iota$ from $V$ into $U$ is contracting: 
 $\rho^U_V(z)>1$ for all $z\in V$. We now state and prove 
 a proposition
 that relates 
 the strength or weakness of this expansion to the
 hyperbolic distance between $z$ and the boundary of $V$.
 (Compare also \cite[Lemma 3.1]{schleicher_zimmer}.) This result is likely
 not new, but as we are not aware of a reference in the literature,
 we shall include the simple proof. 

\begin{prop}
\label{prop_metric_comp}
Let $V$ and $U$ be hyperbolic Riemann surfaces with $V\subsetneq U$.
 Let $z\in V$ and set $R := d_U(z,U\setminus V)$. Then 
\begin{align*}
  1<\frac{2e^R}{(e^{2R}-1)\cdot
    \log\frac{e^R+1}{e^R-1}} \leq
      \rho^U_V(z) \leq 1 + \frac{2}{\e^{R} -1}.
\end{align*}
\end{prop}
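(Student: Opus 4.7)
The plan is to lift everything to the unit disk via a universal covering map. Let $\pi\colon\D\to U$ be a holomorphic universal cover with $\pi(0)=z$, and let $\Vt$ be the component of $\pi^{-1}(V)$ containing $0$. Because the restriction of a covering to the preimage of an open subset is again a covering, and $V$ is connected, $\pi|_{\Vt}\colon\Vt\to V$ is itself a holomorphic covering. Both $\pi$ and its restriction are therefore local isometries of the relevant hyperbolic metrics, yielding the identity
\[ \rho_V^U(z) \;=\; \rho_{\Vt}^{\D}(0). \]
A standard lifting argument also shows that $d_{\D}(0,\D\setminus\Vt)=R$: any curve from $z$ to $U\setminus V$ lifts to a curve of the same length from $0$ ending in $\D\setminus\Vt$, while any curve starting at $0$ and leaving $\Vt$ must first cross $\partial\Vt\subseteq\pi^{-1}(U\setminus V)$, giving the reverse inequality. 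This reduces the statement to bounding $\rho_{\Vt}^{\D}(0)$ for an arbitrary subdomain $\Vt\subsetneq\D$ with $0\in\Vt$ and $R=d_{\D}(0,\D\setminus\Vt)$.

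For the upper bound, set $r:=\tanh(R/2)=(e^R-1)/(e^R+1)$, so that the hyperbolic ball $B_R^{\D}(0)$ is exactly the Euclidean disk $D_r(0)$. By definition of $R$, this disk is contained in $\Vt$, and Schwarz-Pick applied to the inclusion gives
\[ \rho_{\Vt}(0)\;\leq\;\rho_{D_r(0)}(0)\;=\;\frac{2}{r}. \]
Dividing by $\rho_{\D}(0)=2$ and rewriting in terms of $R$ produces $\rho_{\Vt}^{\D}(0)\leq 1/r = 1+2/(e^R-1)$, which is the required upper bound.

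For the lower bound, I would first observe that $\D\setminus\Vt$ is closed in $\D$ and meets the compact hyperbolic ball of radius $R+1$ about $0$, so the infimum defining $R$ is attained at some point $w^{\ast}\in\D\setminus\Vt$ with $|w^{\ast}|=r$. After a rotation of $\D$ fixing $0$ we may assume $w^{\ast}=r$, so that $\Vt\subseteq\D\setminus\{r\}$ and Schwarz-Pick gives $\rho_{\Vt}(0)\geq\rho_{\D\setminus\{r\}}(0)$. The M\"obius map $\phi(w)=(w-r)/(1-rw)$ is a biholomorphism of $\D\setminus\{r\}$ onto $\D^{\ast}$ sending $0$ to $-r$ with $|\phi'(0)|=1-r^2$; combining this with the explicit formula for $\rho_{\D^{\ast}}$ recalled at the start of the section yields
\[ \rho_{\D\setminus\{r\}}(0)\;=\;\rho_{\D^{\ast}}(-r)\cdot(1-r^2)\;=\;\frac{1-r^2}{r\,(-\log r)}. \]
Substituting $r=(e^R-1)/(e^R+1)$ and dividing by $\rho_{\D}(0)=2$ gives exactly the displayed lower bound, while the strict inequality $1<\rho_V^U(z)$ is immediate from the strict form of Schwarz-Pick for the proper inclusion $\Vt\subsetneq\D$ (equivalently, for $\D\setminus\{r\}\subsetneq\D$). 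The main obstacle is the initial reduction: one has to verify cleanly that $\pi|_{\Vt}$ really is a covering and that the distance $R$ is transported faithfully to the disk, so that the comparison reduces to the two model inclusions $D_r(0)\subseteq\Vt\subseteq\D\setminus\{r\}$. Once this is set up, each bound follows from a single application of monotonicity together with an elementary computation.
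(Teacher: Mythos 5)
Your proof is correct and follows essentially the same strategy as the paper's: lift to $\D$ via a universal cover $\pi$ with $\pi(0)=z$, check that $\rho_V^U(z)=\rho_{\Vt}^{\D}(0)$ and that $d_{\D}(0,\D\setminus\Vt)=R$, and then sandwich $D_r(0)\subset\Vt\subset\D\setminus\{r\}$ with $r=\tanh(R/2)$. The only cosmetic difference is in evaluating $\rho_{\D\setminus\{r\}}(0)$: you push forward by the M\"obius map $w\mapsto(w-r)/(1-rw)$ and land at $\rho_{\D^*}(-r)(1-r^2)$, while the paper uses the disk automorphism swapping $0$ and $r$ to write the quotient as $\rho_{\D^*}(r)/\rho_{\D}(r)$; by radial symmetry of $\rho_{\D^*}$ these give the same formula.
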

\begin{remark}
 The exact dependence of the bounds (which are sharp, as the proof 
  will show) on the number $R$ 
  is not relevant for our
  purposes. What is important is that they depend only on $R$ and that the upper bound tends to
  $1$ as $R\to\infty$, while the lower bound tends to infinity as $R\to 0$.  
\end{remark}
\begin{proof} 
Let $\pi: \D\to U$ be a universal covering map with $\pi(0)=z$, and let
  $\Vt$ be the component of $\pi^{-1}(V)$ containing $0$. Since
 $d_{\D}(x,y)\geq d_{U}(\pi(x),\pi(y))$ 
 for all $x,y\in \D$, the set $\Vt$ contains a hyperbolic disc centred at
  zero and of radius $R$, measured with respect 
  to
  the hyperbolic metric on $\D$. 

 Furthermore, there is a point of $\partial V$ that has
  hyperbolic distance $R$ from $z$ (in the metric of $U$). Connecting
  this point to $z$ using a geodesic of length $R$ and lifting this
  geodesic to the unit disc, we see that $\partial \Vt$ contains 
  a point $\wt{R}$ such that $\dist_{\D}(0,\wt{R})=R$. 
  By precomposing with a rotation, 
  we may normalise $\pi$ such that 
  $\wt{R}$ is real and positive. 
  (We emphasize that it does not matter in this argument
   whether or not $\pi$ is injective on the hyperbolic disk of radius $R$,
   since geodesics of $U$ always lift to geodesics of $\D$.)

 Hyperbolic discs in $\D$ centred at $0$ are Euclidean discs centred
  at $0$, hence we have
  \[ D_{\wt{R}}(0)\subset \Vt \subset \D\setminus\{\wt{R}\}. \]
  Thus we obtain upper and lower bounds:
   \[ \rho_V^U(z) = \frac{\rho_{\Vt}(0)}{\rho_{\D}(0)} \leq
      \frac{\rho_{D_{\wt{R}}(0)}(0)}{\rho_{\D}(0)} = 
      \frac{1}{\wt{R}} \]
    and likewise
   \[ \rho_V^U(z) \geq
      \frac{\rho_{\D\setminus\{\wt{R}\}}(0)}{\rho_{\D}(0)} =
      \frac{\rho_{\D^*}(\wt{R})}{\rho_{\D}(\wt{R})}. \]
   So overall we see that
   \begin{equation} \label{eqn:nonexplicitbounds}
    1 < \frac{\rho_{\D^*}(\wt{R})}{\rho_{\D}(\wt{R})} \leq
       \rho_V^U(z) \leq 
            \frac{1}{\wt{R}}, \end{equation}
   and the bounds are sharp. As mentioned in the remark above, these inequalities 
    would be
    sufficient for the purposes of
    our article,
       together with the observation that
    $\wt{R}\to 1$ as $R\to\infty$. 

  To obtain the explicit bounds as stated, we compute the
   values in (\ref{eqn:nonexplicitbounds}). First note that
   the hyperbolic distance in $\D$ between $0$ and $z\in\D$ is
   given by $d_{\D}(z) = \log\frac{1+|z|}{1-|z|}$, and thus 
   $\wt{R}=\frac{e^R-1}{e^R+1}$. Hence 
    $1/\wt{R}=1+2/(e^R-1)$, proving the upper bound. 
   For the lower bound, we apply the
    explicit formulae for the hyperbolic densities of $\D$ and $\D^*$:
\[      \frac{\rho_{\D^*}(\wt{R})}{\rho_{\D}(\wt{R})}
    = \frac{1-\wt{R}^2}{2\wt{R}\cdot|\log\wt{R}|} 
     = \frac{2e^R}{(e^R+1)^2}\cdot\frac{e^R+1}{e^R-1}\cdot\frac{1}{\log\frac{1}{\wt{R}}} 
     = \frac{2e^R}{(e^{2R}-1)\cdot\log\frac{e^R+1}{e^R-1}},  \] 
  as claimed.
 \end{proof}

The following is an immediate consequence of  
Proposition \ref{prop_metric_comp}.
\begin{cor}
\label{cor_metric_comp}
 Let $U$ be a hyperbolic Riemann surface, let $V\subsetneq U$ be an open subset
  of $V$ and let $(z_n)$ be a sequence of points in $V$. 
  Then $\dist_U(z_n,U\setminus V)\to\infty$ if and only if 
\begin{align*}
 \rho_V^U(z_n)\searrow 1\text{ as } n\to\infty.
\end{align*}
\end{cor}

\section{Statement and proof of the main result}
 We now state our main technical result, which allows us to
  exclude the existence of wandering domains in many cases. Although we mainly apply
  the theorem in the case of an entire function, 
  we shall state it generally for holomorphic maps
  on arbitrary Riemann surfaces. 

 \begin{thm}
\label{thm:expansion}
 Let $U$ be a hyperbolic Riemann surface. Let $U'\subset U$ be open and let
   $f:U'\to U$ be a holomorphic covering map. 
   Assume that there is an open connected set 
   $W\subset U'$ such that 
   $f^n(W)\subset U'$ for all $n\geq 0$. 

  Then, for all $w\in W$,
    \begin{equation} \label{eqn:zhengestimate}
    \liminf_{n\to\infty}
         d_U(f^{n}(w), U \setminus f^{n}(W))>0. \end{equation}

  Furthermore, let $D\subset U$ be open and set $V:= f^{-1}(D)$. Suppose $(n_k)$
   is a sequence such that 
   $f^{n_k}(w)\in D$ and $\dist_U(f^{n_k}(w), U\setminus D)\to\infty$. Then 
   \begin{equation} \label{eqn:ourestimate}
     d_U(f^{n_k-1}(w),U\setminus V)\to \infty.
  \end{equation}
 \end{thm}
\begin{remark}[Remark 1]
 The statement is a little technical, so the reader may wish to connect it
  with our argument in the case of the exponential map.
  Here $U=\C\setminus [0,\infty)$, 
   $f(z)=e^z$ and $U'=f^{-1}(U)$. 

  Let $D=U\cap D_{\eps}(0)$ for some $\eps>0$. Then  $V$
   is contained in the left half plane $\{\re z <\ln(\eps)\}$, and the hyperbolic
   distance in $U$ between any point of this half plane and the boundary
   $\{\re z = \ln(\eps)\}$ is bounded from above. (To see this, connect the
   given point with the boundary using a segment of a large semi-circle
   centred at zero.)
  
  If $W$ is a wandering domain whose orbit accumulates at zero, then
    we can apply (\ref{eqn:ourestimate}) to obtain a contradiction.
   Hence such wandering domains cannot exist 
   (and thus there are no wandering domains at all).
\end{remark}
\begin{remark}[Remark 2]
 The inequality (\ref{eqn:ourestimate}) 
   will be the main ingredient in the proof of Theorems
   \ref{thm:mainescaping} and \ref{thm:main}. Inequality (\ref{eqn:zhengestimate}) 
   is not required in these proofs, but arises naturally from our considerations;
   we state it here mainly to highlight the connection with previous work.
   Indeed, (\ref{eqn:zhengestimate})
   appears implicity (in a less general setting, but with the same idea in the proof) 
   in work of 
   Zheng \cite[Proof of Theorem 2.1]{zhengahlfors}, and indeed we can
   generalise \cite[Theorem 2.1]{zhengahlfors} to a larger class of
   functions using (\ref{eqn:zhengestimate}); see Proposition
   \ref{prop:bergweileretal}. Furthermore, in the
   case where $W$ is chosen maximal with the given properties (e.g.\ if
   $W$ is a Fatou component of a transcendental entire or
   meromorphic function), the distance in 
   (\ref{eqn:zhengestimate}) actually tends to infinity. This is the type of
   argument used by Bergweiler 
   in 
   \cite[Lemma 3]{bergweilerinvariantdomainssingularities}, and will be used
   in our proof of Theorem \ref{thm:baker}. Since these extensions 
   of Theorem \ref{thm:expansion} depart from our main line of inquiry, we shall defer
   their discussion to Section \ref{sec:further}. 
\end{remark}
\begin{remark}[Remark 3]
 It seems natural to state our conclusions in terms of the hyperbolic distance.
   By Proposition \ref{prop_metric_comp}, we can equivalently phrase them as results on the
   relative density of
   the hyperbolic metric of $f^n(W)$ and $V$ with respect to the hyperbolic metric on $U$:
   \begin{equation}
     \limsup_{n\to\infty} \rho^U_{f^n(W)}(f^n(w)) < \infty \end{equation}
    and
   \begin{equation}
      \rho^U_V(f^{n_k-1}(w)) \to 1. \end{equation}
  Indeed, it is these conditions that our proof will establish. 
\end{remark}
 \begin{proof}
 The proof begins just as in the case of the exponential map.
  We define $w_n := f^n(w)$, $W_n:= f^n(W)$ and study the
  sequences 
    \[ \delta_n := \|Df^n(w)\|_U \geq 1 \quad\text{and}\quad
       \tilde{\delta}_n := \|Df^n(w)\|^W_{W_n} \leq 1. \]
  Because $f$ is a covering map, the first of these sequences is nondecreasing, while the
  second is nonincreasing by Pick's theorem. The sequences satisfy 
  \[ 0 < \delta_0 \leq \delta_n \leq C\cdot \tilde{\delta}_n \leq C\cdot \tilde{\delta}_0 =: \delta_0', \]
  where $C=\rho^U_{W}(w)$. (I.e., $1/C$ is the hyperbolic derivative at $w$
   of the inclusion of $W$ into $U$.) 
    In particular, we have
   \[ \rho^U_{W_n}(w_n) =
       C\cdot \frac{\tilde{\delta}_n}{\delta_n} \leq C. \]
    As noted in Remark 3, this implies (\ref{eqn:zhengestimate}) by Proposition
     \ref{prop_metric_comp}. 

   Let us set $\eta_n := \|Df(w_n)\|_U$. Then
    $\delta_{n+1}= \eta_n\cdot \delta_n$, and thus $\eta_n\to 1$ 
    as $n\to\infty$.
  Now let $D$ and $n_k$ be as in the statement of the theorem. 
   Then $w_{n_k-1}\in V =f^{-1}(D)$, for all $k\geq 0$. 
    We write
  \[ 
      \eta_{n_k-1} = \|Df(w_{n_k-1})\|_U =
         \frac{\rho^U_{V}(w_{n_k-1})}{\rho^U_{D}(w_{n_k})}\cdot 
         \|Df(w_{n_k-1})\|^V_{D} = 
    \frac{\rho^U_{V}(w_{n_k-1})}{\rho^U_{D}(w_{n_k})}  \]
   where we used the fact that $f:V\to D$ is a covering map, and hence
   a local isometry. 
   By assumption, we have
    $d_{U}(w_{n_k},U\setminus D)\to\infty$ as $k\to\infty$; hence
    $\rho^U_{D}(w_{n_k})\to 1$ by Corollary
    \ref{cor_metric_comp}. Thus we see that
   \[ \rho^U_V(w_{n_k-1}) =
       \eta_{n_k-1}\cdot \rho^U_{D}(w_{n_k}) \to 1. \]
    This statement is
     equivalent to (\ref{eqn:ourestimate}) by Corollary \ref{cor_metric_comp}. 
\end{proof}

In the next section, 
 we apply the preceding result in the following setting: 
\begin{itemize}
\item $f:\C\to\C$ is a transcendental entire function, 
\item  $U=\C\setminus A$, where $A$ is a closed forward-invariant set that contains $S(f)$,
\item $W$ is a wandering domain of $f$ whose orbit is disjoint from $A$, 
\item $D=U\cap D(z_0,\eps)$, where $z_0$ is a finite limit function of the sequence $f^n|_W$. 
\end{itemize}
We note that, in particular, we can let $A$ be the postsingular set of $f$.
  Since $z_0$ belongs to the Julia set of $f$, the estimate
  (\ref{eqn:zhengestimate}) implies that $z_0\in P(f)$. Furthermore, 
  $W$ must be simply connected, and it is easy to deduce from this and
  (\ref{eqn:zhengestimate}) that $z_0$ cannot be an isolated point of
  $P(f)$; compare the claim in the proof of Theorem \ref{thm:zhengbergweiler} below. 
  This fact was originally proved in \cite{bergweiler_etal} for entire functions
  using a different method; our argument is in essence
  the same as that given by Zheng \cite{zhengahlfors} for functions meromorphic
  outside a small set.

\section{Application to entire functions}

We now use Theorem \ref{thm:expansion}
to deduce
 Theorem \ref{thm:mainescaping} and a slightly different form of
 Theorem \ref{thm:main}. (In the next section, we shall see that this 
 form implies the original formulation.) 

\subsection*{Accumulation at singular values through unbounded sets}
  We begin by showing that, in the situations we consider, 
  wandering domains must accumulate
  on singular values.

\begin{lem}
\label{lem:singvalue}
 Let $f\in\B$. Assume that there is a number $R>0$ such that
  the iterates of $f$ tend to infinity uniformly on
  the set $\{z\in P(f): |z|\geq R\}$. 

  Let $W$ be a wandering domain of $f$ for which $\infty$ is a limit function,
   and let $w\in W$. Then there is $s\in S(f)$ and a 
   sequence $(n_k)$ such that
    \[ f^{n_k}(w)\to s\quad\text{and}\quad f^{n_k-1}(w)\to\infty. \]
\end{lem}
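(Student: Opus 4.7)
The plan is to extract the desired subsequence via a compactness/diagonalization argument and then to identify the finite limit point as a singular value using the geometric structure of $f \in \B$ near infinity; this second identification is the main obstacle.

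First, I would invoke the Eremenko--Lyubich theorem \cite[Theorem~1]{eremenko_lyubich_2} cited in the introduction: for $f \in \B$, no wandering Fatou component can have $f^n|_W \to \infty$ locally uniformly. Together with the hypothesis that $\infty$ is a limit function of $(f^n|_W)$, this forces the orbit of $w$ to have both $\infty$ and at least one finite value as subsequential limits. Writing $w_n := f^n(w)$, for every $M > 0$ the orbit $(w_n)$ enters and leaves $\{|z| > M\}$ infinitely often, so the set of ``exit indices'' $\{n : |w_{n-1}| > M,\ |w_n| \leq M\}$ is infinite (provided $M$ exceeds the modulus of some finite limit point). A diagonal argument over an increasing sequence $M_j \to \infty$, followed by compactness in $\Ch$, then yields a subsequence $(n_k)$ with $w_{n_k-1} \to \infty$ and $w_{n_k} \to z^*$ for some $z^* \in \C$.

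The central step is to show $z^* \in S(f)$, rather than merely $z^* \in P(f) \cap J(f)$ (which is automatic from the wandering property). Suppose for contradiction $z^* \notin S(f)$; pick $\eps > 0$ with $\overline{D_{2\eps}(z^*)} \cap S(f) = \emptyset$. Because $f \in \B$ and all asymptotic values of $f$ are contained in $S(f)$, every connected component of $f^{-1}(D_{2\eps}(z^*))$ is a bounded simply connected domain mapped biholomorphically onto $D_{2\eps}(z^*)$ by $f$. By connectedness of $W$, the set $f^{n_k-1}(W)$ is contained in a single such component $V_k$, which contains $w_{n_k-1}$; since each $V_k$ is bounded while $|w_{n_k-1}| \to \infty$, the sheets $V_k$ must recede to infinity. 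I would then apply Theorem~\ref{thm:expansion} with $U := \C \setminus P(f)$, $U' := f^{-1}(U)$, and $z_0 := z^*$ (noting that the escape hypothesis together with the wandering property of $W$ forces the forward orbit of $W$ to be eventually disjoint from $P(f)$). The theorem yields $\rho^U_V(w_{n_k-1}) \to 1$ for $V := f^{-1}(D_\eps(z^*) \cap U)$, equivalently $d_U(w_{n_k-1}, U \setminus V) \to \infty$ by Proposition~\ref{prop_metric_comp}; combined with the bounded-sheet structure and the escape hypothesis on $P(f) \cap \{|z| \geq R\}$, this should produce a contradiction by exhibiting nearby points of $f^{-1}(P(f))$ that belong to $U \setminus V$.

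The principal difficulty is this last identification step, namely translating the topological picture (bounded preimage sheets moving off to infinity in a plane whose marked set $P(f)$ is sparse outside $\overline{D_R(0)}$) into a concrete hyperbolic-metric contradiction. A conceptually cleaner alternative, which I would pursue first, is to argue directly that any sequence $z_k \to \infty$ with $f(z_k) \to z^* \in \C$ exhibits $z^*$ as an asymptotic value of $f$, hence $z^* \in S(f)$ by the definition of $\B$. To promote the sequential limit to a genuine path limit, one uses equicontinuity of $(f^{n_k-1})$ on a neighborhood of $w$ inside the Fatou component $W$: the sets $f^{n_k-1}(W)$ accumulate at $\infty$ spherically uniformly, yielding an actual curve in $\C$ going to infinity along which $f$ tends to $z^*$.
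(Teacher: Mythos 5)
Your proposal correctly identifies the two-step structure --- extract a subsequence with $w_{n_k-1}\to\infty$ and $w_{n_k}\to z^*\in\C$, then show $z^*\in S(f)$ --- but both steps have genuine gaps.

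For the extraction step, your diagonal argument picks $n_k$ with $|w_{n_k-1}|>M_k$ and $|w_{n_k}|\leq M_k$, and then invokes compactness in $\Ch$. That guarantees only $z^*\in\Ch$; since the bound $|w_{n_k}|\leq M_k$ grows with $k$, you have not ruled out $z^*=\infty$, and nothing in your argument does so. The paper closes exactly this gap by first proving (Claim~1) that, under the uniform escape hypothesis on $P(f)\cap\{|z|\geq R\}$, the set of postsingular points whose orbit ever re-enters a fixed disk $D_K(0)$ is \emph{bounded}; one then fixes $K$ so that $\liminf |w_n|<K$, obtains the corresponding bound $M$, and chooses the exit indices via a stopping-time construction (Claim~2) that forces $|w_{n_k}|\leq M$ with $M$ independent of $k$ while simultaneously forcing $w_{n_k-1}\to\infty$. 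Note that the conclusion is simply false without the escape hypothesis on $P(f)$, which your extraction argument never uses --- that is a signal that something is missing.

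For the singular-value identification, your ``cleaner alternative'' rests on the claim that a sequence $z_k\to\infty$ with $f(z_k)\to z^*$ exhibits $z^*$ as an asymptotic value. This is false in general: take $f(z)=e^z$ and $z_k=2\pi ik$; then $z_k\to\infty$ and $f(z_k)=1\to 1$, yet $1\notin S(\exp)=\{0\}$. Promoting the sequential limit to a path limit requires joining the components $V_k$ of $f^{-1}(D_\eps(z^*))$ containing the $z_k$, and these are disjoint bounded sets receding to infinity --- the equicontinuity of $f^{n_k-1}$ on a compact neighbourhood of $w$ produces a sequence of small sets in distinct wandering components, not a connected curve to infinity. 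Your primary approach via Theorem~\ref{thm:expansion} with $U=\C\setminus P(f)$ is left incomplete by your own admission (and also asserts without proof that the forward orbit of $W$ is eventually disjoint from $P(f)$, which needs an argument). The paper's Claim~3 avoids all of this: Claim~1 shows that for large $k$ the component of $f^{-1}(D_\eps(s))$ containing $w_{n_k-1}$ is disjoint from $P(f)$, so if $s\notin S(f)$ a single-valued branch of $f^{-n_k}$ can be defined on $D_\eps(s)$ sending $f^{n_k}(w)$ to $w$; a Koebe/shrinking argument then contradicts $s\in J(f)$ together with $f^{n_k}\to s$ locally uniformly on $W$. This pullback argument is elementary and entirely independent of Theorem~\ref{thm:expansion}; the hyperbolic-metric machinery enters only later, in the proofs of Theorems~\ref{thm:mainescaping} and \ref{thm:sectors}.
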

\begin{proof}
  We may assume without loss of generality that $R$ is chosen so
   large that 
   $|s|<R$ for all $s\in S(f)$. 
 \begin{claim}[Claim 1]
  For every $K>R$, there exists $N$ such that
    \[ f^{-n}(\overline{D_K(0)})\cap P(f)\subset D_R(0) \]
   for all $n\geq N$.  
  In particular, the set
    \[ P(f)\cap 
       \bigcup_{m=0}^{\infty} f^{-m}( D_K(0)) \]
    is bounded.
 \end{claim}
 \begin{subproof}
   Let us fix some $K>R$. 
   By assumption, there is $N$ such that
   \[ f^{n}(P(f)\setminus D_R(0))\cap \cl{D_K(0)} = \emptyset \]
    for $n\geq N$. 
    This proves the first statement.

   To deduce the second statement, set 
     \[ M := \max\{|f^j(z)|: |z|\leq R, 0\leq j \leq N\}, \]
    let $v_0\in P(f)$ with $|v_0|>M$ and let $m\geq 0$. We will show that
    $|f^m(v_0)|\geq K$. Note that this is true by
    construction for $m\geq N$, so we may suppose that $m<N$. 

    Since $P(f)$ is the closure of the union
    of iterated forward images of singular values of $f$, 
    there is a sequence
    $v_k\in P(f)$ with $v_k\to v_0$, $|v_k|>M$ and
    $v_k=f^{n_k}(s_k)$, where $s_k\in S(f)$. By choice of $M$, we must 
    have $n_k>N$ (since 
    $|s_k|<R$) and  
    $|f^{n_k-N+m}(s_k)|>R$ (since $m<N$) for all $k$.
    Thus it follows
    from the first statement that
    \[ |f^m(v_k)|= |f^{N}(f^{n_k-N+m}(s_k))| > K. \] 
    By continuity, we see that $|f^m(v_0)|\geq K$, as claimed.
 \end{subproof}

 Now let $W$ and $w$ be as in the statement of the lemma.

 \begin{claim}[Claim 2]
   There is a sequence $n_k\to\infty$ such that
    $f^{n_k-1}(w)\to\infty$ and 
    $f^{n_k}(w)\to s$ for some $s\in P(f)$. 
 \end{claim}
\begin{subproof}
  As mentioned in the introduction, Eremenko and Lyubich
   \cite[Theorem 1]{eremenko_lyubich_2} proved that functions in $\B$ cannot
   have Fatou components in which the iterates tend to infinity.
   Hence we may choose $K\geq R+1$ sufficiently large that
   $\liminf_{n\to\infty} |f^{n}(w)|< K$. 
   Recall also that, by assumption, 
   $\limsup_{n\to\infty} |f^{n}(w)| =\infty$.

 By Claim 1, there is $M>K$ such that
   $|z|< M$ whenever $z\in P(f)$ with $|f^j(z)|\leq K$ for some 
   $j\geq 0$.

 Let $m_k\to\infty$ be a sequence such that
   $|f^{m_k}(w)|> M$. For every $k$, choose $p_k\geq m_k$ minimal with
   $|f^{p_k}(w)|\leq K$, and let $n_k\in \{m_k+1,\dots,p_k\}$ be minimal with
   $|f^j(w)|\leq M$ for $n_k\leq j\leq p_k$. 

  We first claim that $p_k-n_k\leq N$ for sufficiently large $k$, where
   $N$ is as in Claim 1. Indeed, otherwise we may assume, passing
   to a subsequence, if necessary, that 
   $p_{k}>n_{k}+N$ for all sufficiently large $k$. Then 
   $\vert f^{p_k-N}(w)\vert \leq M$. Let $v_1$ be an accumulation point
   of the sequence $f^{p_k-N}(w)$. Then $v_1\in P(f)$ and 
   $|v_1|\leq M$. By continuity, we then have 
   $|f^{N}(v_1)|\leq K$, and hence, by Claim 1,
   $|v_1|\leq R\leq K-1$. But then
   $|f^{p_{k}-N}(w)|\leq K$, which is
   a contradiction to the minimality of $p_{k}$. 

  We claim that $f^{n_k-1}(w)\to\infty$. Indeed, otherwise
   let $v_2\in P(f)$ be a finite accumulation point of this sequence.
   Since $\vert f^{n_k-1}(w)\vert >M$, we have that $|v_2|\geq M$. 
   By continuity, $|f^{m}(v_2)|\leq K$ for some $m\leq N$. But this
   is a contradiction to the choice of $M$. 

 The claim follows, replacing $(n_k)$ by a subsequence for which
   $f^{n_k}(w)$ is convergent, if necessary.   
\end{subproof}

\begin{claim}[Claim 3]
  If $s$ is as in Claim 2, then $s\in S(f)$. 
\end{claim}
\begin{subproof}
  Let $\eps>0$ be sufficiently small. Then it follows from Claim 1
   that the union of all components of
   $V := f^{-1}(D_{\eps}(s))$ that intersect $P(f)$ is bounded. 
  In particular, for sufficiently large $k$, the component
   of $V$ containing $f^{n_k-1}(w)$ does not intersect $P(f)$. 

  If $s$ is not a singular value, it follows that we can define
   a branch of $f^{-n_k}$ on $D_{\eps}(0)$ that maps 
   $f^{n_k}(w)$ to $w$. This leads to a contradiction since $s$ belongs
   to the Julia set. 
\end{subproof}
\end{proof}

\begin{proof}[Proof of Theorem \ref{thm:mainescaping}]
  Assume that the hypotheses of Theorem \ref{thm:mainescaping} are satisfied. That is,
   $f\in\B$ is a function for which the singular values escape to infinity uniformly and 
   $A\subset\C$ is a closed set with $(S(f)\cup f(A))\subset A$ such that
   all 
   connected components of $A$ are unbounded. Furthermore, there are $\eps>0$
   and 
   $c\in (0,1)$ such that the following holds: If $z\in A$ with $|z|\geq R$ and
   $w\in\C$ with $|w-z|\leq c|z|$, then $\dist(f(w),S(f))>\eps$. 
  We 
   set $U := \C\setminus A$; then every component of $U$ is simply connected.

  Assume, by contradiction, that $f$ has a wandering domain $W$ 
   and let $w\in W$. Then every finite accumulation point of $f^n(w)$ is contained in
   $P(f)$, and in particular $f^n(w)$ accumulates at $\infty$. By the preceding lemma,
   there is a singular value $s\in S(f)$ and a sequence $n_k$ such that
   $f^{n_k}(w)\to s$ and $f^{n_k-1}(w)\to \infty$. 
  Disregarding finitely many
   entries, we may assume that
   $f^{n_k}(w)\in D_{\eps}(s)$ for all $k$. 

  In particular, $w\notin A$ (otherwise, the hypotheses would imply that
   $\dist(f^{n_k}(w),s)>\eps$ for all sufficiently large $k$). 
   Since $w\in W$ was arbitary,
   this means that $W\subset U$. 

  \begin{claim}
    Set $U:=D_{\eps}(s)\cap U$ and $V:= f^{-1}(D)$, and let 
     $z_0\in V$ with $|z_0|\geq 2 R$. Then
     $\dist(z_0,\partial U)\geq c|z_0|/2$.
   In particular, $d_U(z_0,U\setminus V)\leq 8\pi/c$. 
  \end{claim}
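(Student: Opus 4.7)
My plan is to prove the two bounds in sequence. For the first, I would argue by contradiction from the sector condition. Suppose $\zeta\in\partial U\subset A$ satisfies $|\zeta-z_0|<c|z_0|/2$. Since $|z_0|\geq 2R$ and $c<1$, the triangle inequality gives $|\zeta|>(1-c/2)|z_0|\geq|z_0|/2\geq R$, so the sector condition applies at $\zeta$ with $w=z_0$. This forces $\dist(f(z_0),S(f))>\eps$, contradicting $|f(z_0)-s|<\eps$ together with $s\in S(f)$.

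For the second bound, my idea is to exhibit an explicit point $w^*\in U\setminus V$ close to $z_0$ in the Euclidean metric and, as a consequence of Part~1 and Proposition~\ref{prop:Koebe_hyp_metric}, in the hyperbolic metric of $U$. Pick $\zeta^*\in\partial U$ realizing $\dist(z_0,\partial U)$; such a point exists, and moreover $\dist(z_0,\partial U)\leq 3|z_0|/2$, because $s\in A$ has $|s|<R\leq|z_0|/2$. In the principal subcase $\dist(z_0,\partial U)\leq|z_0|/2$, the triangle inequality gives $|\zeta^*|\geq|z_0|/2\geq R$, so the sector condition applies at $\zeta^*$. Setting $w^*:=\zeta^*+(c|z_0|/4)(z_0-\zeta^*)/|z_0-\zeta^*|$, the point $w^*$ lies in the open disk $D_{\dist(z_0,\partial U)}(z_0)\subset U$ while $|w^*-\zeta^*|=c|z_0|/4<c|\zeta^*|$, so the sector condition excludes $w^*$ from $V$. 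Along the Euclidean segment from $z_0$ to $w^*$, $\zeta^*$ remains the closest boundary point, so integrating $\rho_U\leq 2/\dist(\cdot,\partial U)$ yields
\[
 d_U(z_0,w^*)\leq 2\log\frac{4\dist(z_0,\partial U)}{c|z_0|}\leq 2\log(2/c),
\]
which lies below $8\pi/c$ for every $c\in(0,1)$. Since the segment from $z_0$ to $w^*$ necessarily crosses $\partial V$, this also bounds $d_U(z_0,\partial V)$.

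The main obstacle is the complementary subcase $\dist(z_0,\partial U)\in(|z_0|/2,3|z_0|/2]$, where the nearest boundary point may satisfy $|\zeta^*|<R$ so that the sector condition is unavailable at $\zeta^*$. Here I would invoke the standing hypothesis that every connected component of $A$ is unbounded: starting from $\zeta^*$, follow its component of $A$ outward until reaching a point $\zeta^{**}$ of modulus at least $R$, and run the analogous construction with $\zeta^{**}$ in place of $\zeta^*$. The slack between the sharp bound $2\log(2/c)$ available in the main case and the stated bound $8\pi/c$ is precisely what absorbs this less favourable scenario.
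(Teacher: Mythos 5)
Your proof of the first inequality is correct and is essentially the paper's argument, merely rephrased as a contradiction rather than as a direct case analysis on $|u|$.

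For the second inequality, you take a genuinely different route: the paper connects $z_0$ to $\partial V$ by an arc of the circle $\partial D_{|z_0|}(0)$, along which \emph{every} point $z$ satisfies $|z|=|z_0|\geq 2R$ and (up to the endpoint) lies in $V$, so the bound $\dist(z,\partial U)\geq c|z_0|/2$ from Part~1 applies uniformly and the hyperbolic length is at most $2\pi|z_0|\cdot 4/(c|z_0|)=8\pi/c$. You instead move radially toward the nearest boundary point $\zeta^*$. This works cleanly -- and even gives the sharper bound $2\log(2/c)$ -- \emph{only} in your ``principal subcase'', and you correctly identify that the subcase where $|\zeta^*|$ may be $<R$ is not covered. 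That gap is real: the hypothesis only controls $f$ near points of $A$ of modulus $\geq R$, and when $\dist(z_0,\partial U)>|z_0|/2$ the nearest boundary point can indeed lie in $D_R(0)$, so nothing forces $w^*\notin V$.

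The sketched remedy does not close the gap. If you replace $\zeta^*$ by a point $\zeta^{**}$ of modulus $\geq R$ on the same component of $A$, you lose all three properties your argument relied on: $\zeta^{**}$ need not realize $\dist(z_0,\partial U)$, so ``$\zeta^{**}$ remains the closest boundary point along the segment'' fails and the $\int 2\,dt/t$ estimate no longer applies; the segment from $z_0$ toward $\zeta^{**}$ may leave $U$ altogether (so $w^{**}$ need not lie in $U$); and $|z_0-\zeta^{**}|$ can be as large as roughly $2|z_0|$, so the ``slack'' in $8\pi/c$ is not in fact what absorbs this case -- the Euclidean length of the path is no longer comparable to $c|z_0|$, and the pointwise density bound along it is not controlled. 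To make this go through you would essentially be forced to abandon the radial segment and instead travel along a circle $\partial D_r(0)$ with $r\geq 2R$, at which point you have recovered the paper's argument. So the key missing idea is precisely the choice of a path on which the modulus stays $\geq 2R$ so that Part~1 applies at every point, rather than a path aimed at the nearest boundary point.
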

 \begin{subproof}
  Let $u\in \partial U$. If $|u|<|z_0|/2$, then
    $|z_0-u|\geq |z_0|/2\geq c|z_0|/2$.
   Otherwise, we have $|u|\geq \vert z_0\vert /2 \geq R$. 
   Since $\dist(f(z_0),S(f))<\eps$,
    the hypotheses of the theorem imply that
   \[ |z_0-u|\geq c\cdot |u| \geq c|z_0|/2. \]

   For the second statement, simply let
    $\gamma$ be an arc of the circle
    $\partial D_{|z_0|}(0)$ that connects $z_0$ to $\partial V$.
    (Note that every component of $V$ is simply connected, and hence
     such an arc must exist. Indeed,
     every component of $D$ is simply connected 
     because $A$ has no bounded components, and $f:V\to D$ is a covering map.) 
    By the previous estimate, 
    $\dist(z,\partial U)\geq c\vert z\vert/2 = c\vert z_0\vert/2$
    for all $z\in\gamma$. 
    Using Proposition \ref{prop:Koebe_hyp_metric}, we see that 
     \[ d_U(z_0,\partial V)\leq
       \ell_U(\gamma) \leq 
     2\pi \vert z_0\vert\cdot \max_{z\in\gamma}\rho_U(z) \leq 
     2\pi \vert z_0\vert\cdot\frac{4}{c\vert z_0\vert}
   =\frac{8\pi}{c}.\qedhere\]
 \end{subproof}

 In particular, we see that $d_U(f^{n_k-1}(w),U\setminus V)$ stays bounded
  as $k\to\infty$ while $d_U(f^n(w),U\setminus D)\to\infty$. 
  This contradicts Theorem \ref{thm:expansion}. 
\end{proof}

\subsection*{The case of real functions} 

We now prove a version of Theorem \ref{thm:main} by combining our
 method with rigidity results for real functions, as
 discussed in \cite[Section 3]{rempe_vanStrien}. 

\begin{defn}
We denote by $\B_{\real}$ the set of all real transcendental entire functions with bounded
sets of singular values. 
The set $\B_{\real}^{*}$ consists of all maps in $\B_{\real}$ with real singular values.
\end{defn}

 We will use the following rigidity result for the class
  $\B_{\real}$:

\begin{prop}[{\cite[Theorem 3.6]{rempe_vanStrien}}]
\label{prop:noboundedWD}
 Let $f\in\B_{\real}$. Let $A_{\bdd}$ be the set of points
  $z\in\C$ whose $\omega$-limit set is a compact subset of
  the real line and which are not contained in attracting or
  parabolic basins. Then $A_{\bdd}$ has empty interior. 

  In particular, if $f\in \B_{\real}^*$, then $f$ has no wandering domain
   whose set of limit functions is bounded. 
\end{prop}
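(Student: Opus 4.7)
The plan is to argue by contradiction: I would assume that $A_{\bdd}$ has nonempty interior and, via the classification of Fatou components, reduce to the case of a wandering domain whose iterates accumulate on a compact subset of the real line, then derive a contradiction from the real-analytic rigidity theorems \cite{kss2,strienrigidity}. First I would pick $z_0\in\interior(A_{\bdd})$; on a small disk $D$ around $z_0$ every orbit has $\omega$-limit set contained in a fixed compact subset of $\R$, so the iterates are uniformly bounded on $D$ and therefore form a normal family. Hence $z_0\in\F(f)$, and its Fatou component $U$ is by hypothesis neither in an attracting nor a parabolic basin and, since orbits are bounded, not a Baker domain either. I would rule out rotation domains by the standard observation that in a Siegel disk or Herman ring a generic orbit is dense in an analytic invariant curve around the centre or in the ring, and such a curve cannot lie in $\R$. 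Thus $U$ is wandering or strictly preperiodic; replacing $U$ by a sufficiently deep forward iterate yields a wandering Fatou component $W$ whose forward iterates accumulate on a compact subset of $\R$.

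The heart of the argument is then to rule out such a $W$, and here I would quote the conclusion of \cite[Theorem 3.6]{rempe_vanStrien}, whose proof rests on the real-analytic rigidity of \cite{kss2,strienrigidity}. Conceptually, this is a real-symmetric version of Sullivan's no-wandering-domains argument: one would construct a nontrivial $f$-invariant Beltrami differential supported on the grand orbit of $W$ and symmetric with respect to complex conjugation, producing a nontrivial one-parameter family $f_t$ of real-symmetric entire maps in $\B_{\real}$ quasiconformally conjugate to $f$. The real rigidity theorem forces any two topologically (and hence quasiconformally) conjugate real-analytic maps of the interval with this type of bounded recurrent behaviour to be \emph{conformally} conjugate, collapsing the family and contradicting the nontriviality of the deformation.

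The second statement is then a short corollary: for $f\in\B_{\real}^{*}$ one has $S(f)\cup f(\R)\subset\R$, hence $P(f)\subset\R$; since every limit function of a wandering domain lies in $P(f)\cap\J(f)$, the set of limit functions of a wandering domain of $f$ is automatically contained in $\R$, and if it is bounded it is a compact subset of $\R$. Every point of such a wandering domain then belongs to $A_{\bdd}$, so $\interior(A_{\bdd})\neq\emptyset$, contradicting the first part. The main obstacle is the appeal to real-analytic rigidity: eliminating rotation domains and Baker components is essentially immediate from the Fatou classification, but the absence of real wandering domains with bounded orbits is a deep analytic fact that I would not attempt to reprove, and ensuring that the surgery can be carried out inside the real-symmetric subclass of $\B_{\real}$ (preserving the bounded singular set condition) is the nontrivial technical point that \cite{rempe_vanStrien} handles.
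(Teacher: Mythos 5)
The paper does not actually prove this proposition: the first assertion is imported verbatim as \cite[Theorem 3.6]{rempe_vanStrien}, and the ``In particular'' sentence is left to the reader as an immediate consequence. Your deduction of that corollary is correct and is surely what the authors intended: for $f\in\B_{\real}^*$ one has $S(f)\cup f(\R)\subset\R$, hence $P(f)\subset\R$; every constant limit function of a wandering component lies in $P(f)$, and these limit functions coincide with the $\omega$-limit set of any point of the component, so a wandering domain with bounded set of limit functions would be an open set contained in $A_{\bdd}$, contradicting $\interior(A_{\bdd})=\emptyset$. Your preliminary reductions (normality on a disk in $\interior(A_{\bdd})$, exclusion of Baker domains and rotation domains via the standard classification) are also fine.

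One logical issue to flag: you write that to rule out the wandering domain $W$ you would ``quote the conclusion of \cite[Theorem 3.6]{rempe_vanStrien}'' -- but that theorem \emph{is} the statement you are proving, so as written the argument is circular. What you should cite as the external input is the real-analytic rigidity of \cite{kss2,strienrigidity} (and, for the quasiconformal surgery producing a family of deformations inside $\B_{\real}$, the analysis of \cite{rempe_vanStrien} itself). Your conceptual sketch of that argument -- build a real-symmetric $f$-invariant Beltrami differential supported on the grand orbit of $W$, obtain a nontrivial quasiconformal deformation within the real-symmetric subclass, and invoke rigidity to collapse it -- is a reasonable high-level description, though I cannot confirm it matches the precise route taken in \cite{rempe_vanStrien}; in any case the paper also declines to reprove this deep fact, so the approaches agree in treating it as a black box.
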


We now state a geometric condition on a function
 $f\in\B_{\real}^*$ that will allow us to
 prove the absence of wandering domains. As we shall see in the
 next section, this condition holds whenever $f$ satisfies the
 hypotheses of Theorem \ref{thm:main}. 

\begin{defn}[Sector condition]
\label{defn:real_sec_cond}
 For $f\in\B_{\real}$, we set
\begin{equation*}
 \Sigma_0(f) := \{\sigma\in\{+,-\}: \lim_{x\to+\infty}
           |f(\sigma x)|=\infty\}.
\end{equation*}
For $\sigma\in \Sigma_0(f)$ we say that 
  $f$ satisfies the \emph{(real) sector condition at $\sigma\infty$} if, 
   for all $R>0$, there are $\vartheta>0$ and $R'>0$ such that
  \[ |f( \sigma x + y )| > R \]
  whenever $x>R'$ and $|y|<\vartheta x$.

 Let us also define 
\begin{align*}
\Sigma(f):=\{\sigma\in\{+,-\}: \exists s\in S(f)\cap\R, n_j\to\infty: 
f^{n_j}(s)\to\sigma\infty\}\subset \Sigma_0(f).
\end{align*}
We say that $f$ satisfies the \emph{(real) sector condition} if 
$f$ satisfies the sector condition at $\sigma\infty$ for all $\sigma\in\Sigma(f)$.
\end{defn}

It is well-known (and easy to see)
 that the sector condition does not change if we
 require it to only hold for \emph{some} $R$, provided $R$ is chosen
 so large that $|s|<R$ for all $s\in S(f)$. 
 (This will also follow from the proof of Theorem \ref{thm:log_der}).

We also note the following standard fact.
\begin{lem} \label{lem:A}
 Let $f\in\B_{\real}$, and let $M>0$ be sufficiently large. Then the set
  \[ A := \bigcup_{\sigma\in \Sigma(f)} \sigma\cdot [M,\infty) \]
  satisfies $A\subset I(f)$ and $f(A)\subset A$. 
\end{lem}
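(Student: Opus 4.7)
The plan is to prove the lemma in three steps: sign stabilization at infinity, forward invariance of $A$, and escape of orbits in $A$. The first two are elementary; the third is the real work and is where the class $\B$ enters.

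First, I would show that for each $\sigma\in\Sigma_0(f)$ the real-valued function $x\mapsto f(\sigma x)$ has a well-defined ``output sign'' $\tau(\sigma)\in\{+,-\}$ for all sufficiently large $x$. This follows from $|f(\sigma x)|\to\infty$, continuity, and the intermediate value theorem: a sign change arbitrarily far out would produce zeros of $f(\sigma\,\cdot\,)$ arbitrarily far out, contradicting $|f|\to\infty$. I would then verify $\tau(\Sigma(f))\subseteq\Sigma(f)$: if $s\in S(f)\cap\R$ witnesses $\sigma\in\Sigma(f)$ via $f^{n_j}(s)\to\sigma\infty$, then applying one more iterate gives $f^{n_j+1}(s)=f(f^{n_j}(s))\to\tau(\sigma)\cdot\infty$, so $\tau(\sigma)\in\Sigma(f)$.

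Second, since $\Sigma(f)\subseteq\{+,-\}$ is finite, I can choose $M$ large enough that simultaneously $M>\sup_{s\in S(f)}|s|$ and, for every $\sigma\in\Sigma(f)$, the map $x\mapsto f(\sigma x)$ has constant sign $\tau(\sigma)$ and satisfies $|f(\sigma x)|\geq M$ for all $x\geq M$. With $A=\bigcup_{\sigma\in\Sigma(f)}\sigma[M,\infty)$, this gives $f(\sigma[M,\infty))\subseteq\tau(\sigma)[M,\infty)\subseteq A$, hence $f(A)\subseteq A$.

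Finally, to conclude $A\subseteq I(f)$, I would invoke the Eremenko--Lyubich expansion for $f\in\B$ from \cite{eremenko_lyubich_2}: for $R_0>\sup|S(f)|$ large enough, every component $T$ of $f^{-1}(\{|w|>R_0\})$ is a simply connected tract on which $f\colon T\to\{|w|>R_0\}$ is a universal cover, and in logarithmic coordinates the induced map expands a standard cylindrical metric by a factor of at least $2$. Hence any orbit all of whose iterates have modulus at least $R_0$ tends to infinity. Enlarging $M$ so that $M\geq R_0$, the invariance established in the previous step forces every orbit starting in $A$ to remain in $A\subseteq\{|z|\geq M\}$, and therefore to escape. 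This last step is the main obstacle: the earlier steps are routine from continuity and the definitions, but the passage from ``stays large'' to ``tends to infinity'' genuinely relies on the class~$\B$ through the Eremenko--Lyubich expansion, without which an orbit could in principle oscillate through arbitrarily large moduli without ever actually escaping.
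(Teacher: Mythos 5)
Your first two steps (sign stabilization along the rays, the map $\sigma\mapsto\tau(\sigma)$ preserves $\Sigma(f)$, and the choice of $M$ giving $f(A)\subset A$) are correct and spell out what the paper leaves implicit, but your step 3 contains a genuine gap. The assertion that for $f\in\B$ ``any orbit all of whose iterates have modulus at least $R_0$ tends to infinity'' is false: functions in $\B$ can have repelling periodic orbits of arbitrarily large modulus, and such an orbit stays large forever without escaping. The Eremenko--Lyubich expansion estimate only controls the \emph{derivative} along orbits that stay in the tracts (after passing to logarithmic coordinates); it shows such orbit points are in the Julia set, but it does not by itself force the orbit to tend to $\infty$. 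For the lemma to be true one must rule out, in particular, fixed points of $f$ on $\sigma[M,\infty)$ for $M$ large, and that requires a pointwise growth estimate along the ray, not an abstract expansion statement.

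This is exactly what the paper supplies: it invokes the Ahlfors distortion theorem, which for $\sigma\in\Sigma_0(f)$ yields
\[
 \liminf_{x\to+\infty}\frac{\log\log|f(\sigma x)|}{\log x}\geq\frac{1}{2},
\]
i.e.\ $|f(\sigma x)|\geq\exp\bigl(x^{1/3}\bigr)$ for all large $x$. This single inequality gives both conclusions at once: it is much stronger than $|f(\sigma x)|\geq M$, so after fixing the output sign it yields forward invariance, and since $\exp\bigl(x^{1/3}\bigr)>x$ the orbit of any $z\in A$ increases in modulus at each step and escapes (in fact super-exponentially fast). To repair your argument, replace the appeal to Eremenko--Lyubich expansion by such a growth bound on the rays --- e.g.\ the Ahlfors distortion estimate as in the paper, or for functions of finite order a more elementary polynomial lower bound --- and then conclude escape by the same forward-invariance-plus-monotone-growth reasoning. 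Without a lower bound on $|f(\sigma x)|$ in terms of $x$, escape does not follow.
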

\begin{proof}
  This follows e.g.\ from the Ahlfors distortion theorem, which
   implies that
\[
 \liminf_{x\to +\infty} \frac{\log\log\vert f(\sigma x)\vert}{\log x}\geq \frac{1}{2} \] 
  whenever $\sigma\in \Sigma_0(f)$.  
\end{proof}

\begin{thm}
\label{thm:sectors}
If $f\in\B_{\real}^{*}$ satisfies the real sector condition, 
then $f$ has no wandering domains.
\end{thm}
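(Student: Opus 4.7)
Suppose, for contradiction, that $W$ is a wandering domain, and fix $w\in W$. Since $f\in\B_{\real}^*$, Proposition \ref{prop:noboundedWD} rules out a bounded set of limit functions, so $\infty$ must be a limit function of $(f^n|_W)$; by \cite{eremenko_lyubich_2}, the iterates in $W$ cannot escape to infinity uniformly either, so finite accumulation points exist as well. Take $A:=\bigcup_{\sigma\in\Sigma(f)}\sigma\cdot[M,\infty)$ as in Lemma \ref{lem:A}, with $M$ large enough that also $P(f)\cap\{|z|\geq M\}\subset A$; this is possible because singular-value orbits can accumulate at $\infty$ only along the directions indexed by $\Sigma(f)$. (In the degenerate case $\Sigma(f)=\emptyset$, $P(f)$ is bounded and I instead take $A:=P(f)$.) Combined with Lemma \ref{lem:A}, the sector condition then yields uniform escape of iterates on $\{z\in P(f) : |z|\geq R\}$, so Lemma \ref{lem:singvalue} produces $s\in S(f)$ and a subsequence $(n_k)$ with $f^{n_k}(w)\to s$ and $f^{n_k-1}(w)\to\infty$.

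Set $U := \C\setminus A$ (hyperbolic in both cases) and $V := f^{-1}(D_{\eps}(s)\cap U)$. Because $A\subset I(f)$ and $W$ does not consist of escaping points, normality of $(f^n|_W)$ forces $f^n(W)\cap A=\emptyset$ for every $n$. Theorem \ref{thm:expansion} therefore applies and gives
\begin{equation*}
 d_U\bigl(f^{n_k-1}(w),\,U\setminus V\bigr)\to\infty.
\end{equation*}
I will derive a contradiction by bounding this distance from above, mimicking the closing argument of the proof of Theorem \ref{thm:mainescaping}.

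For the upper bound: if $z\in V$ with $|z|$ large then $|f(z)|\leq|s|+\eps$, so the contrapositive of the sector condition forces $z$ to lie outside every sector at $\sigma\infty$ with $\sigma\in\Sigma(f)$. An elementary estimate then gives $\dist(z,A)\geq c'|z|$ for some constant $c'>0$ depending only on $\vartheta$ (trivially so when $A$ is bounded). Now join $z := f^{n_k-1}(w)$ to $\partial V$ by an arc $\gamma$ of the circle $\partial D_{|z|}(0)$, rotating from $z$ toward a sector direction (or, if $\Sigma(f)=\emptyset$, into any tract of $f$, which exists since $f\in\B$). Upon entering the sector, the sector condition forces $|f|>|s|+\eps$, so $\partial V$ must be crossed within angular distance $<\pi$. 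Along $\gamma$, $\dist(w,A)\geq c'|z|$, so by Proposition \ref{prop:Koebe_hyp_metric} one has $\rho_U(w)\leq 2/(c'|z|)$ and therefore $\ell_U(\gamma)\leq 2\pi/c'$, contradicting the display above.

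The main obstacle is this last geometric step: verifying that $\gamma$ stays in $U$ (which is easy, as the circle meets $A$ in at most two points) and that $\partial V$ is actually attained before $\gamma$ enters a sector. The uniform angular width $\vartheta$ supplied by the sector condition is essential here, since without it $V$ could in principle extend into arbitrarily thin cusps along $A$, making the hyperbolic-distance bound on $d_U(z,\partial V)$ fail.
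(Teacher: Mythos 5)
Your overall strategy correctly mirrors the paper's: use Proposition \ref{prop:noboundedWD} to rule out bounded limit functions, invoke Lemmas \ref{lem:A} and \ref{lem:singvalue} to produce a subsequence $f^{n_k}(w)\to s\in S(f)$ with $f^{n_k-1}(w)\to\infty$, and then apply Theorem \ref{thm:expansion} together with a hyperbolic-length estimate à la the closing argument of Theorem \ref{thm:mainescaping}. However, there is a genuine gap in your choice of the domain $U$. You set $U := \C\setminus A$ where $A=\bigcup_{\sigma\in\Sigma(f)}\sigma\cdot[M,\infty)$. For Theorem \ref{thm:expansion} to apply you need $f:U'\to U$ (with $U'=f^{-1}(U)$) to be a covering map, which requires $U\cap S(f)=\emptyset$, i.e.\ $S(f)\subset A$. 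But $S(f)$ is bounded and real while your $A$ consists of rays starting at modulus $M$, so for large $M$ one has $S(f)\cap A=\emptyset$ and $f|_{U'}$ has critical and asymptotic values inside $U$. The covering-map hypothesis fails and the monotonicity $\|Df^n(w)\|_U\geq 1$ that drives Theorem \ref{thm:expansion} is no longer available. The paper avoids this by taking $U := \C\setminus (A\cup P(f))$, which is hyperbolic, forward-invariant, and contains $S(f)$ in its complement (since $S(f)\subset P(f)$), so that $f:f^{-1}(U)\to U$ really is a covering. The orbit of $W$ is still contained in this $U$ because $A\subset I(f)$ and every point of $P(f)$ has either an escaping or a bounded orbit.

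A secondary concern: you enlarge $M$ so that $P(f)\cap\{|z|\geq M\}\subset A$, asserting this is possible "because singular-value orbits can accumulate at $\infty$ only along the directions indexed by $\Sigma(f)$." This needs an argument: $P(f)$ accumulating at $\sigma\infty$ a priori only gives points $f^{n_k}(s_k)\to\sigma\infty$ with varying $s_k\in S(f)$, whereas $\sigma\in\Sigma(f)$ requires a single $s$ whose orbit has a subsequence tending to $\sigma\infty$. This extraction is not automatic when $S(f)$ is infinite. By using $\C\setminus(A\cup P(f))$ you bypass the need for this absorption entirely, and the distance estimate $\dist(z,\partial U)\gtrsim |z|$ for large $z\in V$ then follows from the sector condition applied at each $\sigma\in\Sigma(f)$, exactly as in the Claim in the proof of Theorem \ref{thm:mainescaping}. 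Your final geometric estimate (joining $f^{n_k-1}(w)$ to $\partial V$ by a circular arc and bounding its hyperbolic length in $U$) is sound once $U$ is corrected; the "main obstacle" you flag at the end is handled, but it is not the place where your argument actually breaks.
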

\begin{proof}
Assume that $f$ has a wandering domain, say $W$. Then the
 set of limit functions on $W$ is contained in the postsingular set,
 and hence in the real axis. 
 Since $I(f)\subset J(f)$ (by the result of Eremenko and Lyubich mentioned
 above), 
 we have $f^n(W)\cap\R\cap I(f)=\emptyset$
 for all $n\in\N$. Furthermore,
 Proposition \ref{prop:noboundedWD} shows that
 the set of limit functions in $W$ cannot be bounded. Thus
 Lemmas \ref{lem:singvalue} and
   \ref{lem:A} show that there is $s\in S(f)$ and a sequence $(n_k)$
  such that $f^{n_k}\to s$ and $f^{n_k-1}\to\infty$. 

 Let us define $U := \C\setminus (A\cup P(f))$, where
  $A$ is as in Lemma \ref{lem:A}. Note that the orbit of $W$ is
  contained in $U$, since $A$ is contained in $I(f)$ and
  every point in $P(f)$ has either an escaping or bounded orbit. 

Now we are in a position to apply Theorem \ref{thm:expansion}. Let
 $D=D_{\eps}(s)\cup U$ and fix $R>|s|+\eps$. The sector condition implies
that there is $R'>0$ and $c\in (0,1)$ such that
 $|f(z)|>R$ whenever $x\geq R'$, $\sigma\in \Sigma(f)$ and
 $|z-\sigma x|\leq c\cdot x$. 

 Set $V := f^{-1}(D)$. 
 Exactly as in the proof of Theorem \ref{thm:mainescaping},
  it follows that 
  \[ \limsup_{n\to\infty} d_U(f^{n-1}(w),U\setminus V) < \infty, \]
 which contradicts Theorem \ref{thm:expansion}.
\end{proof}

\section{Functions that satisfy the real sector condition}

Let us now formulate some conditions to ensure that  
a function satisfies the real sector condition. 
Our first result shows that this property can be
 expressed in terms of an estimate on the 
 logarithmic derivative. 

\begin{thm}
\label{thm:log_der}
 Let $f\in\B_{\real}$ and $\sigma\in \Sigma_0(f)$. 
 Then $f$ satisfies a sector condition at $\sigma\infty$ 
 if and only if there exist constants $r,K>0$ with
\begin{equation}\label{eqn:log_der}
 \frac{\vert f'(\sigma x)\vert}{\vert f(\sigma x)\vert}
\leq K\cdot\frac{\log\vert f(\sigma x)\vert}{x}
\end{equation}
for all $x\geq r$.
\end{thm}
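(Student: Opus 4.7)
The plan is to prove each implication separately: the forward direction via the Schwarz lemma applied to $\log f$ on the sector disk, and the converse via Koebe's $1/4$ theorem applied to the inverse of $\log f$ on a logarithmic tract.

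\textbf{($\Rightarrow$)} Assume the sector condition at $\sigma\infty$ with constants $R > \sup_{s \in S(f)} |s|$ and $\vartheta, R' > 0$. For $x > R'$ the disk $D_{\vartheta x}(\sigma x)$ is simply connected and contained in $\{z : |f(z)| > R\}$, so a single-valued branch $h = \log f$ is defined there and maps into the half-plane $\{w : \Rea w > \log R\}$. Conjugating this half-plane to $\D$ by a M\"obius transformation and invoking the Schwarz lemma at the centre $\sigma x$ gives
\[ \frac{|f'(\sigma x)|}{|f(\sigma x)|} = |h'(\sigma x)| \leq \frac{2\bigl(\log|f(\sigma x)| - \log R\bigr)}{\vartheta x}. \]
For $x$ large enough that $|f(\sigma x)| > R$, this is the estimate (\ref{eqn:log_der}) with $K = 2/\vartheta$.

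\textbf{($\Leftarrow$)} Assume (\ref{eqn:log_der}). Fix $R$ so large that $S(f) \subset D_R(0)$ and $|f(\sigma x)| \geq R^2$ for all $x \geq r$, and let $T$ be the connected component of $\{|f| > R\}$ containing $\sigma[r, \infty)$. A standard logarithmic tract argument---the map $f \colon T \to \{|z| > R\}$ is a covering because $\{|z| > R\}$ avoids $S(f)$, and for $f \in \B$ the tract $T$ is simply connected---shows that $\log f$ restricts to a conformal bijection $T \to H$, where $H := \{w : \Rea w > \log R\}$. Let $\phi \colon H \to T$ denote the inverse. By reality of $f$, $\phi$ sends $(\log R, \infty)$ onto $\sigma[r, \infty)$, and at $w := \log|f(\sigma x)|$ we have $\phi(w) = \sigma x$ and
\[ |\phi'(w)| = \frac{|f(\sigma x)|}{|f'(\sigma x)|} \geq \frac{x}{Kw} \]
by the log derivative bound.

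Applying Koebe's $1/4$ theorem to the univalent map $\phi$, together with $\dist(w, \partial H) = w - \log R \geq w/2$ (since $w \geq 2\log R$), yields
\[ T = \phi(H) \supset D_{\rho}(\sigma x), \qquad \rho = \frac{(w - \log R)\,|\phi'(w)|}{4} \geq \frac{x}{8K}. \]
This is the sector condition with $\vartheta := 1/(8K)$ for the chosen $R$, which suffices by the remark following Definition \ref{defn:real_sec_cond}. The main obstacle is the structural input that $T$ is simply connected and $\log f \colon T \to H$ is a global conformal isomorphism; this is where the $\B$ hypothesis enters essentially (Koebe's theorem would otherwise yield no useful lower bound on the Koebe disk), and it is a standard output of logarithmic tract theory à la Eremenko--Lyubich. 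Granted this, the rest is a mechanical deployment of the Schwarz and Koebe estimates.
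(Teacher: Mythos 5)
Your proof is correct and rests on the same ingredients as the paper's: the tract $T$ of $f$ over a neighbourhood of $\infty$, the fact (from the class-$\B$ hypothesis) that $T$ is simply connected and $f|_T$ a covering, and the Schwarz/Koebe estimates on the hyperbolic metric. The only organizational difference is that the paper packages everything into a single two-sided inequality $\frac{1}{4\dist(z,\partial T)} \leq \frac{|f'(z)|}{|f(z)|\log|f(z)|} \leq \frac{2}{\dist(z,\partial T)}$ derived by combining $\rho_T(z)=\rho_{D^*}(f(z))|f'(z)|$ with Koebe-type bounds on $\rho_T$ and the explicit formula for $\rho_{D^*}$, from which both implications drop out; you instead prove the two directions separately, with the forward one done slightly more cheaply by applying Schwarz directly to $\log f$ on the sector disk $D_{\vartheta x}(\sigma x)$ (so it does not need simple connectivity of $T$), and the converse via Koebe's $1/4$-theorem on $\phi=(\log f)^{-1}$, which is equivalent to the paper's lower bound on $\rho_T$.
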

\begin{remark}[Remark 1]
 This concludes the proof of Theorem \ref{thm:main}.
\end{remark}
\begin{remark}[Remark 2]
 We note that the opposite inequality holds for every function 
  $f\in\B$ and a suitable constant $K$; this is an immediate consequence of 
the expansion property of (logarithmic lifts of) maps in the class $\B$ 
(compare e.g. \cite[Lemma 1]{eremenko_lyubich_2}) and also
 follows from (\ref{eqn:log_der_sector}) below.
 If $f\in \B$ has finite order of
 growth, then $f$ satisfies
 (\ref{eqn:log_der}) 
  for all $x$ outside a set of
 finite logarithmic measure; this can also be deduced from (\ref{eqn:log_der_sector}).
\end{remark}
\begin{proof}
 Let $R \geq 1+ \max_{s\in S(f)}|s|$ and set
  $D^* := \C\setminus \overline{D_R(0)}$. 
  Denote by $T$ the component of
  $f^{-1}(D^*)$ that contains
  $\sigma x$ for sufficiently large $x$. 
  ($T$ is
  called a \emph{tract} of $f$.) Since $f:T\to D^*$ is
  a covering map and $f$ is transcendental, $T$ is simply connected. 
 Recall from Proposition \ref{prop:Koebe_hyp_metric}
 that 
\[ \frac{1}{2\dist(z,\partial T)}\leq \rho_T(z)\leq  \frac{2}{\dist(z,\partial T)}.
\] 
 Since $D^*$ is mapped conformally to the punctured unit disc by 
 $z\mapsto R/z$, we have 
\[ \rho_{D^*}(z) = 
    \frac{1}{|z|\log\frac{|z|}{R}}
 \]
 for all $z\in D^*$. In particular,
\begin{align*}
\frac{1}{\vert z\vert\cdot\log\vert z\vert}
\leq\rho_{D^*}(z) 
\leq\frac{2}{\vert z\vert\cdot\log\vert z\vert}
\end{align*}
for $|z|>R^2$. 
Since $f\vert_T$ is a covering map, we have 
$\rho_T(z) = \rho_{D^*}(f(z))\cdot \vert f'(z)\vert$. Combining this
 with the above estimates, we see that
\begin{align}
\label{eqn:log_der_sector}
\frac{1}{4\cdot\dist(z,\partial T)}\leq 
\frac{\vert f'(z)\vert}{\vert f(z)\vert\cdot\log\vert f(z)\vert}
\leq \frac{2}{\dist(z,\partial T)}
\end{align}
when $|z|>R^2$. So we see that
\begin{equation} \label{eqn:sector_dist}
   \dist(\sigma x, \partial T)\geq \eps x 
\end{equation}
holds for some $\eps>0$ and all sufficiently large $x$ if and only if
 (\ref{eqn:log_der}) 
 is satisfied for some $K>0$ and all sufficiently large $x$. 
 Now $f$
 satisfies the 
 sector condition at $\sigma\infty$
 if and only if (\ref{eqn:sector_dist}) holds. The claim follows.
 (Note that, since (\ref{eqn:log_der}) is independent of $R$,
 the sector condition is also independent of $R$, provided
 $R$ is chosen to be of size at least
 $1+\max_{s\in S(f)}|s|$.)
\end{proof}

The real sector condition is preserved under precomposition with appropriate 
polynomials.
\begin{cor}
\label{cor:comp_pol}
 Assume that $f\in\B_{\real}$ satisfies the real sector condition 
  at $\sigma\infty$ for all $\sigma\in\Sigma_0(f)$.
 If $p$ is a real polynomial, then 
  $g := f\circ p$ also satisfies 
  the real sector condition at $\sigma\infty$ for every
  $\sigma\in\Sigma_0(g)$.
\end{cor}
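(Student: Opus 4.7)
The plan is to reduce the corollary to the logarithmic-derivative characterization of the sector condition provided by Theorem \ref{thm:log_der}, exploiting the fact that the log-derivative estimate behaves well under composition with polynomials.

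First I would verify that $g \in \B_{\real}$. The function $g$ is real entire and, provided $\deg p \geq 1$ (which is implicit since $g$ must be transcendental), it is transcendental. Its singular values are contained in $S(f) \cup f(C(p))$, where $C(p)$ denotes the finite set of critical values of the polynomial $p$; both pieces are bounded, so $S(g)$ is bounded and hence $g\in \B_{\real}$. This is what allows us to invoke Theorem \ref{thm:log_der} for $g$.

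Next, fix $\sigma \in \Sigma_0(g)$. Since $|g(\sigma x)| = |f(p(\sigma x))| \to \infty$ as $x \to +\infty$ and $f$ is bounded on bounded sets, we must have $|p(\sigma x)|\to\infty$. Because $p$ is a real polynomial, this forces $p(\sigma x)$ to have a constant sign $\tau\in\{+,-\}$ for all sufficiently large $x$, and $\tau\in\Sigma_0(f)$. Applying Theorem \ref{thm:log_der} to $f$ at $\tau\infty$ (with the variable $y := |p(\sigma x)|$), the hypothesis yields constants $K,r>0$ such that
\[
  \frac{|f'(p(\sigma x))|}{|f(p(\sigma x))|} \leq K \cdot \frac{\log |f(p(\sigma x))|}{|p(\sigma x)|}
\]
for all sufficiently large $x$. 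By the chain rule $g'(\sigma x) = f'(p(\sigma x))\cdot p'(\sigma x)$, and since $p$ is a polynomial of degree $d\geq 1$ the elementary estimate $|p'(\sigma x)|/|p(\sigma x)| \leq C/x$ holds for some $C>0$ and all sufficiently large $x$. Multiplying the two inequalities gives
\[
  \frac{|g'(\sigma x)|}{|g(\sigma x)|} \;=\; \frac{|f'(p(\sigma x))|}{|f(p(\sigma x))|}\cdot |p'(\sigma x)| \;\leq\; KC\cdot \frac{\log|g(\sigma x)|}{x},
\]
which is precisely the log-derivative estimate of Theorem \ref{thm:log_der} for $g$ at $\sigma\infty$. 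Invoking the reverse direction of Theorem \ref{thm:log_der} with $g$ in place of $f$ then produces the sector condition for $g$ at $\sigma\infty$.

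There is no substantial obstacle; the main things to check are that $g\in\B_{\real}$ so that the equivalence in Theorem \ref{thm:log_der} applies, and that the sign $\tau$ of $p(\sigma x)$ for large $x$ indeed lies in $\Sigma_0(f)$ so that the hypothesis can be applied at the correct end. Everything else is a direct chain-rule computation combined with the trivial polynomial estimate $|p'|/|p|=O(1/x)$.
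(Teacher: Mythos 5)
Your argument is correct and coincides with the alternative route the paper's proof explicitly offers (``Alternatively, it is easy to verify (\ref{eqn:log_der}) for $g$ from the corresponding condition for $f$''): you transfer the log-derivative bound of Theorem \ref{thm:log_der} from $f$ to $g=f\circ p$ via the chain rule and the elementary estimate $|p'(x)|/|p(x)|=O(1/x)$, after checking that $g\in\B_{\real}$ and that the end $\tau\infty$ to which $p(\sigma x)$ converges lies in $\Sigma_0(f)$. The paper's primary argument is more geometric---the preimage of a truncated sector under a real polynomial $p$ of degree $d$ again contains a truncated sector, since $\arg p(z)\approx d\arg z$ near $\infty$---which trades the analytic bookkeeping of your version for a one-line picture, but both proofs verify exactly the same equivalence from Theorem \ref{thm:log_der}, and your write-up is complete.
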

\begin{proof}
 This follows from the fact that the preimage of a sector under
  any polynomial 
  $p$ again contains a sector. Indeed, we have
  $\arg(p(z))\approx d\arg z$, where $d$ is the degree of $p$. 

 Alternatively, it is easy to verify (\ref{eqn:log_der}) for
  $g$ from the corresponding condition for $f$. 
\end{proof}

To give some explicit examples, let us consider a transcendental entire 
function of the form
\begin{align}
\label{eqn:neg_zeroes}
 f(z):=\prod\limits_{n=1}^{\infty} \left( 1+\frac{z}{x_n}\right),\quad 0<x_n\leq x_{n+1}.
\end{align}
So $f$ is a real function of order $\rho(f)<1$ 
with only negative real zeros, but infinitely many of these.

If $f$ is chosen such that 
$\sup\vert f(x)\vert <\infty$
for $x<0$ then $f$ belongs to the class $\B$. Indeed, by definition,
 $f$ is the locally uniform limit of polynomials with only real zeros 
 (i.e., $f$ belongs to the \emph{Laguerre-Polya class}). Since all
 critical points of the approximating polynomials belong to the
 negative real axis, the same is true of $f$; hence the set of
 critical values of $f$ is bounded. On the other hand, the set of
 asymptotic values of $f$ is finite by the Denjoy-Carleman-Ahlfors theorem.

 We have
\begin{align*}
 x\cdot\frac{f'(x)}{f(x)} =x\cdot \sum\limits_{n=1}^{\infty}\frac{1}{x_n + x} 
=  \sum\limits_{n=1}^{\infty} \left( 1 - \frac{x_n}{x_n + x}\right)
=\sum\limits_{n=1}^{\infty} \left( 1 - \frac{1}{w_n(x)}\right),
\end{align*}
where $w_n(x):=(x_n + x)/x_n=1+x/x_n$. 
Note that for a fixed $x$, the sequence $(w_n)=(w_n(x))$ is 
decreasing and converges to $1$. 
Since $\log y>1-1/y$ for $y>1$,  
we obtain that 
 \[ x \cdot \frac{f'(x)}{f(x)}\leq \sum_{n=1}^{\infty} \log w_n =
               \log\prod_{n=1}^{\infty} ( 1 + \frac{x}{x_n}) = 
              \log f(x). \] 
Thus Theorem \ref{thm:log_der} implies that $f$ 
satisfies the real sector condition.
 (Alternatively, this is also easy to check from the definition
  of $f$ and the original statement of the sector condition.) 

Many functions can be written as $f\circ p$, where $f$ is as in (\ref{eqn:neg_zeroes})
and $p$ is a polynomial as in Corollary \ref{cor:comp_pol}, e.g.
\begin{align*}
F_{2k}(z):=\int\limits_{0}^{\infty} \exp(-t^{2k})\cosh(tz) dt,\ k>0,\quad\text{or}\quad 
I_0(z):= 1 + \sum\limits_{k=0}^{\infty} \left(\frac{z^k}{2^k k!}\right)^2.
\end{align*}
These maps occur for $p(z)=z^2$ and $f$ of order $1/2$, converging to $0$ along the
negative real axis \cite[8.4, 8.63]{titchmarsh}. This representation is shared 
 by the function $\sinh z/ z$. Hence
 Corollary \ref{cor:sinh} is a consequence of
 Theorem \ref{thm:main}. (Of course it is also very simple
 to check either (\ref{eqn:log_der}) or the sector condition explicitly
 for these functions.)

Finally, we remark that
 the sector condition can also be phrased by locating the preimages of
 certain compact sets. To do so,  let us define \emph{truncated sectors}
\begin{align*}
 S_{\sigma,\vartheta,R}:=\{\sigma x+iy\in\C:\; x > R, \vert y\vert < \vartheta x\},
\end{align*}
where $\sigma\in\{+,-\}$, $\vartheta>0$ and $R>0$.

\begin{thm}
\label{thm:nonsing_val_omitted}
 Let $f\in\B_{\real}$ and let $\sigma\in\Sigma_0(f)$.
Then the following statements are equivalent:
\begin{itemize}
 \item[(i)] $f$ satisfies the sector condition at
    $\sigma\infty$.
 \item[(ii)] If $K\subset \C$ is compact, then
   $f^{-1}(K)$  
   omits some truncated sector $S_{\sigma,\theta,R}$.
 \item[(iii)] There exists a point $w$ in the unbounded component of 
   $\C\setminus S(f)$ such that
   $f^{-1}(w)$ omits a truncated sector $S_{\sigma,\theta',R'}$.
\end{itemize}
\end{thm}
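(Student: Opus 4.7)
My plan proves the equivalence via the cycle (i) $\Rightarrow$ (ii) $\Rightarrow$ (iii) $\Rightarrow$ (i); the first two arrows are essentially definitional, and the third carries the substantive content.

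For (i) $\Rightarrow$ (ii), given compact $K\subset\C$, I would fix $R$ large enough that $K\subset\overline{D_R(0)}$ and $R\geq 1+\max_{s\in S(f)}|s|$, and apply the sector condition to obtain $\vartheta,R'$ with $|f(z)|>R$ throughout $S_{\sigma,\vartheta,R'}$; this sector is then disjoint from $f^{-1}(K)$. For (ii) $\Rightarrow$ (iii), the set $\{z : |z|>\max_{s\in S(f)}|s|\}$ sits inside the unbounded component $U$ of $\C\setminus S(f)$, so $U$ is nonempty; applying (ii) to the singleton $K=\{w\}$ for any $w\in U$ yields (iii).

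The substantive step is (iii) $\Rightarrow$ (i), for which I argue the contrapositive. Suppose the sector condition fails; take $R:=1+\max_{s\in S(f)}|s|$ and let $T$ be the tract of $f$ at $\sigma\infty$, so (as in the proof of Theorem \ref{thm:log_der}) $T$ is simply connected and $f|_T\colon T\to D^*:=\C\setminus\overline{D_R(0)}$ is a universal covering; the geometric reformulation of that proof gives a sequence $x_n\to\infty$ with $\dist(\sigma x_n,\partial T)/x_n\to 0$. Let $\phi\colon T\to\{\Rea z>0\}$ be a Riemann map with $f|_T(z)=R\exp(\phi(z))$. My main claim is: for every $w^*\in D^*$ and every truncated sector $S_{\sigma,\theta,R'}$, the set $f^{-1}(w^*)\cap S_{\sigma,\theta,R'}$ is nonempty. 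To see this, let $p_n\in\partial T$ realize $\dist(\sigma x_n,\partial T)$; since $|f|$ drops from $|f(\sigma x_n)|\to\infty$ to $R$ along the segment $[\sigma x_n,p_n]$, the intermediate value theorem produces $q_n$ on this segment with $|f(q_n)|=|w^*|$, i.e.\ $\Rea\phi(q_n)=\alpha:=\log(|w^*|/R)$. A computation in the right half plane shows that the hyperbolic ball $B:=B_h(q_n,\rho)\subset T$ with $\rho$ chosen so that $2\alpha\sinh(\rho/2)\geq\pi$ has $\phi$-image (a Euclidean disk in $\{\Rea>0\}$) whose intersection with the line $\{\Rea=\alpha\}$ has vertical extent at least $2\pi$; hence $\phi(B)$ meets the $2\pi i$-spaced lattice $\log(w^*/R)+2\pi i\mathbb{Z}$ of preimages of $w^*$, giving a preimage $z_k\in B$. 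A Koebe-type estimate bounds the Euclidean diameter of $B$ by $O_\rho(\dist(q_n,\partial T))=o(x_n)$, so $B\subset S_{\sigma,\theta,R'}$ for large $n$, and the claim follows.

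To close the contrapositive I need this conclusion for the specific $w$ given by (iii). When $|w|>R$ it is immediate. For general $w\in U$ I would connect $w$ to some $w^*\in D^*$ by a path $\gamma\subset U$, subdivide $\gamma$ finitely, and propagate the preimage-avoidance statement from $w^*$ to $w$ via the one-step lemma: for $w_0,w_1\in U$ sufficiently close, every preimage of $w_1$ lies Euclidean-close to a preimage of $w_0$ (namely the endpoint of a short lift of the connecting arc under the covering $f\colon f^{-1}(U)\to U$), so a preimage of $w_1$ in a slightly smaller truncated sector $S'$ forces a preimage of $w_0$ in the original truncated sector $S\supsetneq S'$. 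The main obstacle is precisely this transfer step: the hyperbolic length of the lift of $\gamma$ equals that of $\gamma$ in $U$, but converting this into uniform Euclidean control of the preimage displacements requires careful Koebe distortion estimates at each preimage, together with a compactness argument along $\gamma$ to keep the total loss of angle and radius uniformly bounded over the finite subdivision.
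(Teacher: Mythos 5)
Your high-level plan matches the paper's: the trivial implications are the same, and the substantive direction $(iii)\Rightarrow(i)$ is argued by contrapositive, using the tract $T$, hyperbolic geometry, and the standard Koebe-type estimates from Proposition~\ref{prop:Koebe_hyp_metric} to show that a sequence $z_k$ witnessing failure of the sector condition produces preimages of the given point inside every truncated sector. The underlying estimates ($\dist(z_k,\partial T)/x_k\to 0$ converted into relative Euclidean closeness of preimages) are also the same.

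Where you diverge---and where the gap lies---is in the choice of the ambient domain. You first work with the covering $f|_T\colon T\to D^*$ over $D^*=\C\setminus\overline{D_R(0)}$, uniformize $T$ by a Riemann map $\phi$ onto a half-plane, and locate preimages of points $w^*$ with $|w^*|>R$ inside hyperbolic balls around $q_n$; this part is fine (the computation that a hyperbolic $\rho$-ball at height $\alpha$ meets $\{\Rea=\alpha\}$ in an interval of length $4\alpha\sinh(\rho/2)$ is correct). But the point $w$ supplied by (iii) is only assumed to lie in the unbounded component of $\C\setminus S(f)$, not in $D^*$, so you are forced to ``propagate'' from $w^*$ to $w$ along a path, and as you note yourself this transfer step is not closed. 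Your one-step lemma as stated (``every preimage of $w_1$ lies Euclidean-close to a preimage of $w_0$'') is false in the absolute sense; what holds is a relative estimate controlled by $\dist(\cdot,\partial T')$, and making this uniform along a subdivided path requires redoing the entire Koebe analysis at each stage. The paper avoids this difficulty outright: it chooses a full compact set $K\supset S(f)$ with $w\notin K$ and $\partial D_R(0)\cap K=\emptyset$ (possible precisely because $w$ is in the unbounded component of $\C\setminus S(f)$), works in $U:=\C\setminus K$, and takes $T$ to be the component of $f^{-1}(U)$ containing the positive real axis eventually. Then $w$ and $\partial D_R(0)$ both lie in the single hyperbolic domain $U$ at bounded mutual hyperbolic distance $\delta$, so a single lift of a path of $U$-length $\leq\delta$ based at $z_k$ produces $w_k\in f^{-1}(w)\cap T$ with $d_T(z_k,w_k)\leq\delta$, and Proposition~\ref{prop:Koebe_hyp_metric} gives $|w_k-z_k|/x_k\to 0$ directly. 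In short: your argument is right in spirit but becomes needlessly delicate by splitting into a ``$D^*$ step'' plus a propagation step, and it is exactly the latter that is not carried out; the paper's choice of $U$ makes both points live in the same hyperbolic domain from the outset, so no propagation is needed.
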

\begin{proof}
Let us assume that $\sigma=+$ and that
 $f(x)\to+\infty$ as $x\to \infty$; the other cases then follow
 by pre- resp.\ post-composing $f$ with the map $x\mapsto -x$.

Obviously (ii) implies (iii). Also, (i) is clearly equivalent to
 (ii). (The sector condition just states that, for every $R>0$,
 the set $f^{-1}(\overline{D_R(0)})$ omits a truncated sector.) 

So, it remains to show that (iii) implies (i). 
 We show the contrapositive, so assume that there is
 a sequence $(z_k)_{k\geq 0}$, $z_k=x_k + iy_k$, such that 
 $x_k\to+\infty$, 
 $y_k/x_k\to 0$ and $\limsup |f(z_k)|<\infty$. By continuity of $f$,
 we may assume that $|f(z_k)|=R$ is constant and 
 $R>\max_{s\in S(f)}|s|$. (Replace $y_k$ by a smaller value $y_k'$
 if necessary; recall that $f(x_k)\to\infty$ as $k\to\infty$.)

 Let $w\in \C\setminus S(f)$; we will show that
 $f^{-1}(w)$ does not omit any 
 truncated sector; more precisely, we shall show that there
 is a sequence $(w_k)\subset f^{-1}(w)$ with
 $|w_k-x_k|/x_k\to 0$.
 Let $K$ be a full subset of $\C$ that contains $S(f)$ but does 
 not contain $w$ or any point of modulus $R$, and 
 define $U:=\C\setminus K$. (Here \emph{full} means that $K$ is compact and
 that $K$ and the complement of $K$ are both connected.) 
  Let $T$ be the component of
  $f^{-1}(U)$ that contains $x$ for all sufficiently large real $x$.

 Note first that, since $f(x_k)\to\infty$, we have
  $d_U(f(x_k),f(z_k))\to\infty$. By Pick's theorem, $f\vert_T:T\to U$ is
  locally a Poincar\'{e} isometry, hence 
  $d_T(x_k,z_k)\to\infty$ as well. 
  Recall that $|x_k-z_k|=|y_k|$ and $|y_k|/x_k\to 0$. Hence Proposition
  \ref{prop:Koebe_hyp_metric} implies that
   \[ \dist(z_k,\partial T)/x_k \to 0. \] 
  Let $\delta$ be the maximal hyperbolic distance in $U$ between $w$ and
  a point of $\partial D_R(0)$. For every
  $k$, we can then find some $w_k\in f^{-1}(w)\cap T$ such that
  $d_T(z_k,w_k)\leq \delta$. Again using 
   Proposition
  \ref{prop:Koebe_hyp_metric}, we must have
   \[ |w_k -z_k|/x_k \to 0, \]
  and hence $|w_k-x_k|/x_k\to 0$, as claimed. 
\end{proof}

We conclude this section with an observation
concerning the perturbation of a function with the real sector property
within the so-called Eremenko-Lyubich parameter spaces (see
 \cite{eremenko_lyubich_2,boettcher}).
\begin{cor}
Let $f\in\B_{\real}$ and let
$\phi,\psi:\C\to\C$ be quasiconformal homeomorphisms fixing the real line 
such that 
\begin{align*}
 g:=\psi^{-1}\circ f\circ \phi
\end{align*}
is an entire function. Then $f$ satisfies the real sector condition if and only if $g$ does.
\end{cor}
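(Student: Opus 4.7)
The plan is to translate the sector condition at $\sigma\infty$ into the omitted-preimage formulation given by Theorem \ref{thm:nonsing_val_omitted}(iii), and then exploit the quasisymmetry of quasiconformal homeomorphisms that fix $\R$ to transport this property between $f$ and $g$. Since $f = \psi \circ g \circ \phi^{-1}$ with $\phi^{-1}, \psi$ again quasiconformal and fixing $\R$, the statement is symmetric in $f$ and $g$, so it suffices to prove one direction, say that if $f$ satisfies the real sector condition, then so does $g$.

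The geometric core is the following lemma that I would establish first: if $h\colon \C \to \C$ is a $K$-quasiconformal homeomorphism with $h(\R) = \R$, then for every truncated sector $S_{\sigma,\vartheta,R}$ the image $h(S_{\sigma,\vartheta,R})$ contains some truncated sector $S_{\sigma',\vartheta',R'}$, where $\sigma' = \sigma$ or $\sigma' = -\sigma$ according as $h|_\R$ preserves or reverses orientation at infinity. This is a standard consequence of the quasisymmetry inequality $|h(a)-h(b)|/|h(a)-h(c)| \le \eta_K(|a-b|/|a-c|)$. Applied with $a = \sigma x$, $b = \sigma x + iy$, and $c = 0$ for $x$ large and $|y|/x$ small, together with the facts that $h(\sigma x), h(0) \in \R$ and $|h(\sigma x)| \to \infty$, it follows that $h(\sigma x + iy)$ lies in an arbitrarily thin angular neighbourhood of the real point $h(\sigma x)$, which escapes to $\sigma'\infty$. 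Choosing $\vartheta'$ sufficiently small controls the opening angle, yielding the claimed containment.

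Given the lemma, the main implication is short. Fix $\tau \in \Sigma(g)$; an orientation-bookkeeping step, using that $\phi, \psi$ restrict to orientation-preserving or reversing homeomorphisms of $\R$ and that the tendency of a real orbit to $\pm\infty$ is a purely topological feature preserved under pre- and post-composition by such maps, identifies $\tau$ with some $\tau' \in \Sigma(f)$ at which $f$ satisfies the sector condition. Theorem \ref{thm:nonsing_val_omitted}(iii) then supplies $w_0$ in the unbounded component of $\C \setminus S(f)$ and a truncated sector $S_{\tau',\vartheta,R}$ disjoint from $f^{-1}(w_0)$. Setting $w := \psi^{-1}(w_0)$, one has $w$ in the unbounded component of $\C \setminus S(g) = \C \setminus \psi^{-1}(S(f))$, while $g^{-1}(w) = \phi^{-1}(f^{-1}(w_0))$ is disjoint from $\phi^{-1}(S_{\tau',\vartheta,R})$, which by the lemma applied to $\phi^{-1}$ contains a truncated sector $S_{\tau,\vartheta'',R''}$. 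This verifies condition (iii) of Theorem \ref{thm:nonsing_val_omitted} for $g$ at $\tau\infty$, so $g$ satisfies the real sector condition.

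The main obstacle I anticipate is not the quasisymmetric sector lemma, which is routine distortion theory, but rather the precise matching $\Sigma(g) \leftrightarrow \Sigma(f)$: since $\phi \neq \psi$ in general, the relation $g = \psi^{-1} \circ f \circ \phi$ is not a dynamical conjugacy, so the correspondence between escaping directions of real postsingular orbits needs to be spelled out carefully using only the homeomorphism property of $\phi, \psi$ on the real line.
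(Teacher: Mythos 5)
Your approach is essentially the same as the paper's: both rest on the single geometric fact that a quasiconformal homeomorphism of $\C$ fixing $\R$ distorts truncated sectors about the real axis only by a bounded amount (so that the $\phi$-preimage of a truncated sector contains a truncated sector). The paper dispatches this in one sentence, appealing to the geometric characterization of quasicircles, and argues directly from the definition of the sector condition (the role of $\psi$ is harmless since $\psi^{-1}$ is a homeomorphism and hence carries any sufficiently large disk complement into a large disk complement); your detour through Theorem~\ref{thm:nonsing_val_omitted}(iii) is a correct but unnecessary reformulation that simply repackages the same containment in terms of omitted preimages. Your derivation via quasisymmetry is fine, though note that the computation you sketch with $a=\sigma x$, $b=\sigma x+iy$, $c=0$ directly shows $h(S_{\sigma,\vartheta,R})\subset S_{\sigma',\vartheta',R'}$ for small $\vartheta$; to obtain the containment in the direction you state one should apply the same estimate to $h^{-1}$ (which of course is also quasiconformal and fixes $\R$), and it is the direction $\phi(S')\subset S$ that is actually used.

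The concern you flag at the end --- matching $\Sigma(g)$ with $\Sigma(f)$ --- is a genuine subtlety. Since $g=\psi^{-1}\circ f\circ\phi$ is \emph{not} a dynamical conjugacy when $\phi\ne\psi$, there is no a priori reason for the escape directions of the real postsingular orbits of $g$ to correspond under $\phi|_\R$ to those of $f$; only the passive sets $\Sigma_0(f)$ and $\Sigma_0(g)$ are matched by $\phi$. The paper's one-line proof does not address this either. The argument (yours and the paper's) cleanly establishes that the sector condition at $\sigma'\infty$, for any $\sigma'\in\Sigma_0(f)$, is equivalent to the sector condition for $g$ at the $\phi^{-1}$-corresponding direction in $\Sigma_0(g)$. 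So what is actually proved is the analogue of Corollary~\ref{cor:comp_pol} (stated there for all of $\Sigma_0$); to obtain the literal ``if and only if'' with $\Sigma$ in place of $\Sigma_0$, one would need some additional input identifying the escaping postsingular directions of $f$ and $g$, which neither proof supplies. You were right to mark this as the real obstruction rather than the distortion lemma.
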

\begin{proof}
 The preimage of a truncated sector around the real axis under the
  quasiconformal map $\phi$ again contains a sector; this follows
  e.g.\ from the geometric characterization of quasicircles. 
\end{proof}

\section{Further results}\label{sec:further}

 \subsection*{An extension of Theorem \ref{thm:expansion}}
   We now return to the general setting of Theorem \ref{thm:expansion}, and
    prove two further statements in a similar spirit. The first strengthens
    (\ref{eqn:zhengestimate}), provided that we choose the domain $W$ to be maximal,
    and is based on 
    similar ideas as \cite[Lemma 3]{bergweilerinvariantdomainssingularities}.
    The second shows that superattracting basins are the only
    domains of normality for which an isolated point of the postsingular set
    can be a constant limit function.
    This is based
    on the same ideas that were applied by Zheng in 
    \cite{zhengahlfors}. 
\begin{thm} \label{thm:zhengbergweiler}
 Let $U$ be a hyperbolic Riemann surface, let $U'\subset U$ be 
  open and let
  $f:U'\to U$ be a holomorphic covering map.

  Let $\W\subset U'$ be the set of all points in $U'$ which have a neighborhood
   $W$ such that $f^n(W)\subset U'$ for all $n$. Then
  \begin{equation}\label{eqn:extendedzheng}
    \lim_{n\to\infty} d_U(f^n(w),U\setminus \W) =\infty 
  \end{equation}
   for all $w\in\W$. In particular, either
   $(f^n(w))$ has no accumulation point in $U$, or the orbit of $w$ 
   eventually enters a cycle of 
   components of $U$ such that
   $f^p:U\to U$ is a conformal isomorphism for some $p\geq 1$.

 Furthermore, let $w\in \W$, and let $W_n$ be the component of
  $\W$ containing $f^n(w)$. Suppose that $z_0$ is a puncture of $U$ and that
  $f^{n_k}|_{W_0}\to z_0$ locally uniformly for some sequence $n_k\to\infty$.
  Then, for sufficiently large $k$, $W_{n_k}\cup \{z_0\}$ is
   simply connected, and $f^p$ extends holomorphically to $z_0$
   with $f^p(z_0)=z_0$ and $(f^p)'(z_0)=0$ for some $p$. In particular,
   $f^{n_k+np}|_W\to z_0$ locally uniformly as $n\to\infty$. 
\end{thm}
\begin{remark}
 By saying that $z_0$ is a puncture of $U$, we mean that
  there is a Riemann surface $X$ such that $U=X\setminus\{z_0\}$. 
\end{remark}
\begin{proof}
  To prove the first statement, we establish the contrapositive. That is, suppose that 
   $W\subset \W$ is open and forward-invariant with $w\in W$, and suppose that 
     \[ d_U(f^{n_k}(w),U\setminus W) \leq C < \infty \]
    for some constant $C$ and some sequence $n_k\to\infty$. We shall show that
   $\partial W$ contains a point of $\W$. Let us set $U_k := f^{-n_k}(U)$.

  Indeed, for each $k$ let $\zeta_k$ be a point of $\partial W$ that is closest to 
    $f^{n_k}(w)$ in the hyperbolic metric of $U$, and hence has hyperbolic distance
   at most $C$ from $f^{n_k}(w)$. Then the hyperbolic geodesic of $U$
   connecting $f^{n_k}(w)$ and $\zeta_k$ belongs to $W$ apart from the
   endpoint $\zeta_k$. Pulling back this curve under $f^{n_k}$, we obtain a curve
   in $W\cap U_k$ that connects $w$ to some point $z_k\in \partial W$. Since
  $f^{n_k}:U_k\to U$ is a covering map, we see that 
   $d_{U_k}(w,z_k)\leq C$. 

  Let $z_0$ be a limit point of the sequence $z_k$ (note that such a limit point
    exists and belongs to $U$, since the points $z_k$ have uniformly bounded hyperbolic
    distance from $w$). Recall that  the points $z_k$ have uniformly bounded hyperbolic
    distance from $w$ in $U_k$, and 
   that
   $d_U(w,\partial U_k) > d_U(w,\partial W)>0$ for all $k$. It follows that
   $d_U(z_k,\partial U_k)$, and thus $d_U(z_0,\partial U_k)$, is
   uniformly bounded from below. Hence $z_0$
    indeed has a neighborhood on which all iterates of $f$ are defined; i.e.\
    $z_0\in \W$ as desired.

 \smallskip

 Next, suppose that the orbit of $f^{n_k}(w)$ has an accumulation point in $U$.
  Let $W_n$ be the component of $\W$ containing $f^{n}(w)$. 
  Clearly, by \eqref{eqn:extendedzheng}, we have
  $W_{n_k}=W_{n_{k+1}}$ for sufficiently large $k$. Hence
  $f^p:W_{n_k}\to W_{n_k}$ is a covering map, where $p=n_{k+1}-n_k$. Since the
  orbit of $w$ accumulates at some point of $W_{n_k}$, it follows that 
  $f^p|_{W_{n_k}}$ is a conformal isomorphism (indeed, this restriction either has
  finite order or is conjugate to an irrational rotation of a disk or an annulus). 

  Now let us prove the final claim, so assume that $X$ is a Riemann surface
   with $U\subset X$, that $z_0$ is an isolated point of $X\setminus U$ and that
   $f^{n_k}(w)\to z_0$. Then we have 
   $f^{n_k}|_{W_0}\to z_0$ locally uniformly by Pick's Theorem.
  Let $D$ be a small
  simply connected neighborhood of $z_0$ with $D\setminus\{z_0\}\subset U$ and set 
  $D^* := D\setminus \{z_0\}$.  

 \begin{claim}
  For sufficiently large $k$, the set $W_{n_k}$ contains a simple closed curve
   $\gamma\subset D^*$ that surrounds $z_0$ (in particular, $W_{n_k}$
   is multiply connected).
 \end{claim}
 \begin{subproof}
  Let $\phi:D\to \D$ be a conformal isomorphism with
    $\phi(z_0)=0$ and set $r_k := |\phi(f^{n_k}(w))|$ (for sufficiently large $k$).
    If we define $\gamma_k := \phi^{-1}(\{z\in \D: |z|=r_k\})$, then 
     the hyperbolic length of $\gamma_k$ in $D^*$, and hence in $U$, tends
     to zero as $k\to \infty$. 

  By (\ref{eqn:zhengestimate}), or \eqref{eqn:extendedzheng},
   it follows that
   $W_{n_k}$ for sufficiently large $k$, proving the 
   claim. 
 \end{subproof}

 By replacing $W_0$ with $W_{n_k}$, for $n_k$ sufficiently large,
   we can assume that $W_0$ contains a curve $\gamma$ as in the
   claim. Note that the component of $U\setminus\gamma$ that contains $\gamma$ on its
   boundary and is not contained in
   $D^*$ cannot be simply connected or conformally equivalent to a punctured disc,
   since every connected component of $U$ is hyperbolic. Let us fix
    $n=n_k$ sufficiently large to ensure that $\gamma$ surrounds $f^n(\gamma)$.

 Let $V^*\subset U$ be 
  the component of $f^{-n}(D^*)$ that contains
  $\gamma$. Since $V^*$ contains $\gamma$, $V^*$ must be multiply connected.
  Since
  $f^{n}:V^*\to D^*$ is a covering map,
  it follows that $V_k^*$ is a punctured disc, and thus 
  $V^*=V\setminus \{z_0\}$, for some simply connected domain $V$. 
  Thus $f^n$ extends holomorphically to $z_0$ with $f^n(z_0)=z_0$. 

 Let $G_0$ be the complementary component of $\gamma$
    that
    contains $z_0$, and set $G_0^* := G_0 \setminus \{z_0\}$. 
    For $m\geq 0$, let $G_m^*$ be the component of $f^{-mn}(G_0^*)$
    that contains $\gamma$. Then, as above, $G_m^*$ is conformally equivalent
    to a punctured disc, and $G_m := G_m^*\cup \{z_0\}$ is simply connected. 
    Let us set
      \[ G := \bigcup_{m\geq 0} G_m\quad\text{and}\quad G^* := G\setminus \{z_0\}. \]
    Then $G$ is simply-connected, and $f^m:G^*\to G^*$ is a covering map. It follows that
    $G^* = W_0$, and that 
   \[ F:G\to G; z\mapsto\begin{cases}
      f^{n}(z) & \text{if } z\in G^* \\
       z_0 &\text{if }z=z_0 \end{cases}\] 
   is an analytic self-map of $G$, and in fact a branched covering map, 
   with the only possible
    branch point at $z_0$.

   Since
    $F(\cl{G_0})\subset G_0$ by choice of $n$ we see that $z_0$ is 
    an attracting or superattracting fixed point. In particular, $F$ is not a conformal isomorphism. 
    Since the only possible branch point of $F$ is at $z_0$, we see that $z_0$ is a critical point; i.e.\
    $z_0$ is a superattracting fixed point of $F$. This completes the proof.
\end{proof}

\subsection*{Applications to Ahlfors islands maps}
 Our results can be applied in a very general framework of one-dimensional 
   holomorphic dynamics, as introduced by Epstein.

 \begin{defn}
  Let $X$ be a compact connected Riemann surface, let $V\subset X$ be open and nonempty,
    and let $f:V\to X$ be holomorphic and nonconstant.

   Then the \emph{Fatou set} of $f$ consists of those points $z\in X$ that have
     a neighborhood $U\subset X$ such that either the iterates 
     $f^n$ are defined on $U$ for all $n\geq 0$ and form a normal
     family there, or $f^n(U)\subset X\setminus\cl{V}$ 
     for some $n\geq 0$. The \emph{Julia set} of $f$ is given by
    $J(f) := X\setminus F(f)$. 

   A component $U$ of $F(f)$ is a \emph{wandering domain} of $f$ if
    $f^n(U)\cap f^m(U)=\emptyset$ for $n\neq m$.

  The set $S(f)$ of \emph{singular values} of $f$ is the smallest compact set
    $S\subset X$ for which $f:f^{-1}(X\setminus S)\to X\setminus S$ is
    a covering map. The \emph{postsingular set} of $f$ is given by
     \[ P(f) := \cl{\bigcup_{n\geq 0} f^n(S(f))}. \]
 \end{defn}
  
 Epstein \cite{adamfinitetype} 
  defined a class of holomorphic functions $f:W\to X$ as above,
  known as \emph{Ahlfors islands maps},  which includes all rational maps, 
  all transcendental entire functions and all 
  meromorphic functions and their iterates (as well as many more), and for which 
  all the basic results of the standard Julia-Fatou theory remain true. We shall not
  state or require the formal definition here, which can be found
  e.g.\ in \cite{hypdim}, along with some background and references.
  For further examples of Ahlfors islands maps, compare \cite{ahlforsmaps}. 

 We begin by generalising the previously mentioned result from \cite{bergweiler_etal} and
  \cite{zhengahlfors} regarding limit functions in a wandering domain.

\begin{prop} \label{prop:bergweileretal}
  Let $f:V\to X$ be holomorphic and nonconstant,  
   where $X$ is a compact connected Riemann surface
   and $V\subset X$ is open and nonempty. Suppose that $F(f)$ has a
    wandering component $W$, and let $w\in W$.

  Then every limit point of the sequence $(f^n(w))$ is a non-isolated point of
    $P(f)$.
\end{prop}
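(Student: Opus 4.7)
The plan is to invoke Theorem \ref{thm:zhengbergweiler} with $U := X\setminus P(f)$ and $U' := V\cap f^{-1}(U)$; forward-invariance of $P(f)$ gives $U'\subset U$, and $S(f)\subset P(f)$ makes $f:U'\to U$ a covering map of hyperbolic surfaces (one may assume $U$ is hyperbolic, else $P(f)=X$ and the statement is trivial). As a preliminary step, since $W\subset F(f)$ is a wandering component, I pass to a subsequence $(n_k)$ with $f^{n_k}|_W\to g$ locally uniformly on $W$; the pairwise disjointness of the sets $f^{n_k}(W)$ together with a Hurwitz-type argument forces $g$ to be a constant function, necessarily $\equiv z_0$. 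In particular, $f^{n_k}(w')\to z_0$ for every $w'\in W$.

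Let $\W$ denote the open set of Theorem \ref{thm:zhengbergweiler}, namely the interior of $\{z\in U':\, f^n(z)\in U' \text{ for all } n\geq 0\}$; since iterates on such a neighborhood omit $P(f)$, Montel's theorem gives $\W\subset F(f)$. The principal case is $W\cap\W\neq\emptyset$: pick $w_0\in W\cap\W$ and let $W_0$ be its connected component in $\W$. As $W_0$ meets the Fatou component $W$, connectedness forces $W_0\subset W$. Applying Theorem \ref{thm:zhengbergweiler} to $W_0$ and $w_0$ with constant limit $z_0$ immediately yields $z_0\in X\setminus U = P(f)$; its second clause further implies that if $z_0$ were isolated in $P(f)$, then some iterate $f^p$ would extend holomorphically across $z_0$ to make $z_0$ a superattracting fixed point. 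But then $W$ would eventually land in the periodic Fatou component at $z_0$, contradicting the wandering hypothesis.

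The residual case $W\cap\W=\emptyset$ lies outside the direct scope of the hyperbolic-contraction argument of Theorem \ref{thm:zhengbergweiler}, and is the main obstacle. I would dispatch it via Baire category: emptiness of $W\cap\W$ forces $W\subset\overline{\bigcup_{n\geq 0}(f^n)^{-1}(P(f))}$, so the sets $W\cap(f^n)^{-1}(P(f))$ form a countable family of closed sets whose union is dense in the Baire space $W$; hence some $W\cap(f^{n_0})^{-1}(P(f))$ contains a nonempty open set $U_0\subset W$. The open mapping theorem then makes $f^{n_0}(U_0)$ a nonempty open subset of $P(f)$, so $P(f)$ has nonempty interior. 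For any $w'\in U_0$ the points $f^{n_k}(w')\in P(f)^{\circ}$ converge to $z_0$, and since $W$ is wandering they differ from $z_0$ for all but at most one $k$; hence $z_0$ is a limit of points in $P(f)\setminus\{z_0\}$, so $z_0\in P(f)$ and is not isolated, as required.
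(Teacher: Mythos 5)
Your overall strategy mirrors the paper's: set $U := X\setminus P(f)$, $U' := f^{-1}(U)$, and feed the wandering component into Theorem~\ref{thm:zhengbergweiler} to deduce that a limit function $z_0$ cannot be an isolated point of $P(f)$. Two steps, however, are genuinely flawed.

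First, the dismissal of the non-hyperbolic case is incorrect. You write ``else $P(f)=X$ and the statement is trivial,'' but $U=X\setminus P(f)$ being non-hyperbolic does \emph{not} force $P(f)=X$. For instance, when $X=\Ch$ and $f$ is conjugate to $z\mapsto z^d$ one has $P(f)=\{0,\infty\}$ and $U=\C^*$, which is non-hyperbolic but far from all of $X$; similarly $P(f)$ could be a single point. The paper instead lists the possible forms of $f$ when $U$ fails to be hyperbolic (automorphisms of $\Ch$ or of a torus, affine toral endomorphisms, power maps) and observes none has wandering domains. Your shortcut is simply false.

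Second, the Baire category argument in your residual case $W\cap\W=\emptyset$ has a real gap. You correctly deduce that the closed sets $F_n := W\cap (f^n)^{-1}(P(f))$ have \emph{dense} union in $W$, but you then conclude ``hence some $F_{n_0}$ contains a nonempty open set.'' Baire's theorem does not yield this from density alone: a countable union of closed nowhere dense sets can be dense (e.g.\ $\Q\subset\R$), so you would need the $F_n$ to actually \emph{cover} $W$, which is precisely what you cannot conclude from $W\cap\W=\emptyset$ (a point can fail to lie in $\W$ because of behaviour of nearby points, while its own orbit avoids $P(f)$). Fortunately the detour through the open set $U_0$ and the claim about $P(f)^{\circ}$ is unnecessary: density already gives a single point $a\in W$ with $f^{n_0}(a)\in P(f)$ for some $n_0$, and by forward invariance of $P(f)$ the sequence $f^{n_k}(a)$ lies in $P(f)$ for $n_k\geq n_0$, converges to $z_0$, and consists of pairwise distinct points because $W$ wanders; this immediately shows $z_0$ is a non-isolated point of $P(f)$. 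This is exactly what the paper does, and it works directly from the cleaner dichotomy ``$f^n(W)\cap P(f)\neq\emptyset$ for some $n$'' versus ``$W\subset\W$,'' avoiding both the Baire argument and the intermediate case where $W$ meets but is not contained in $\W$ (which your Case~A already happens to absorb, since $\W\subset F(f)$ forces the component $W_0$ of $\W$ to sit inside $W$).
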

\begin{proof}
  Set $U := X\setminus P(f)$. If $U$ is not hyperbolic, then either $f$ is 
   a conformal automorphism of the Riemann sphere or of a torus, or
   $f$ is a linear endomorphism on a torus, or  
   $X$ is the
   Riemann sphere and $f$ is conformally conjugate to a power map
   $z\mapsto z^d$, $d\in\Z\setminus\{-1,0,1\}$. None of these examples
   have wandering domains, and hence we may assume that $U$ is hyperbolic.

  If we set $U' := f^{-1}(U)$, then $U'\subset U$ and $f:U'\to U$ is
   a covering map. If $f^n(W)\cap P(f)\neq\emptyset$ for some $n\geq 0$, then
   our conclusion holds. (If $f^{n_k}(w)\to a$, then $f^{n_k}|_W\to a$ locally uniformly,
   and hence $a$ is in the accumulation set of an orbit in $P(f)$.)
   Otherwise,  we can apply Theorem
   \ref{thm:zhengbergweiler} to conclude that no limit point of
   $f^n(w)$ can be an isolated point of $X\setminus U = P(f)$, as claimed.
\end{proof}

 We now also apply the first half of Theorem \ref{thm:zhengbergweiler} in
   this general setting. To do so, it is useful to 
   also use natural conformal metrics on non-hyperbolic Riemann surfaces.
   For such a surface $S$, we fix a metric of constant curvature $0$ or $1$
   (the precise normalization in the case of curvature $0$ will not matter
    in our context), and use $d_S$ to denote the distance with respect to this
    metric. 

 \begin{prop} \label{prop:distancetojulia}
  Let $f:V\to X$ be holomorphic and nonconstant, where $X$ is a compact 
   connected Riemann surface
   and $V\subset X$ is open and nonempty.  Also assume that $f$ does not
   extend to a conformal automorphism of $X$. Set $U := X\setminus P(f)$, and let
   $w\in U\cap F(f)$. 

  Then $d_U(f^n(w) , U\cap J(f) )\to \infty$ as $n\to \infty$
    (where we use the convention that the distance is infinite if
     $U\cap J(f)=\emptyset$). 
 \end{prop}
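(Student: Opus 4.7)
The strategy is to recognize this as an instance of the first assertion of Theorem~\ref{thm:zhengbergweiler}, applied to the pair $U := X \setminus P(f)$ and $U' := f^{-1}(U)$. Forward-invariance of the postsingular set ($f(P(f)) \subset P(f)$) gives $U' \subset U$, and by the defining property of $S(f) \subset P(f)$ the restriction $f \colon U' \to U$ is a holomorphic covering map. If $U$ fails to be hyperbolic, the same dichotomy as in the proof of Proposition~\ref{prop:bergweileretal} confines $f$ to a short list of exceptional self-maps (Latt\`es-type endomorphisms of a torus, or power maps on the Riemann sphere), in each of which the conclusion is vacuous or immediate; so from now on I assume $U$ is hyperbolic.

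Let $\W \subset U'$ be the set of points possessing a neighborhood whose full forward orbit lies in $U'$, as in Theorem~\ref{thm:zhengbergweiler}. I first observe that $\W \subset F(f)$: any $z \in \W$ has a neighborhood $N$ on which the iterates $\{f^n\}$ are defined with values in the hyperbolic Riemann surface $U$, making the family normal by Montel's theorem in its general form. Hence $U \cap J(f) \subset U \setminus \W$, and it suffices to show $d_U(f^n(w), U \setminus \W) \to \infty$, which is precisely the conclusion of Theorem~\ref{thm:zhengbergweiler} once $w \in \W$ is established. If some iterate $f^m(w)$ happens to lie in $P(f)$, then forward-invariance keeps $f^n(w) \notin U$ for all $n \geq m$, and the stated convention makes the conclusion automatic; otherwise the orbit of $w$ stays in $U$ for all $n$, and I must produce a neighborhood $N$ of $w$ with $f^n(N) \subset U'$ for every $n \geq 0$.

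This last step is the main obstacle. Normality of $\{f^n\}$ on the Fatou component $W_0 \ni w$ gives spherical equicontinuity, but since the orbit of $w$ may accumulate on $P(f)$ (by Proposition~\ref{prop:bergweileretal}), equicontinuity alone does not prevent nearby orbits from striking the postsingular set. I would proceed by contradiction: if no admissible $N$ exists, there are $z_k \to w$ and integers $n_k$ with $f^{n_k}(z_k) \in P(f)$. The case of bounded $n_k$ is ruled out by continuity together with closedness of $P(f)$, which would force $f^{n_k}(w) \in P(f)$ and contradict the standing assumption; for $n_k \to \infty$ one extracts via normality a locally uniform subsequential limit $g = \lim f^{n_k}$ on $W_0$, and combines the relation $g(w) \in P(f)$ with $f^n(w) \in U$ for all $n$ to derive the contradiction. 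Once $w \in \W$ is secured, Theorem~\ref{thm:zhengbergweiler} delivers the stated limit and completes the proof.
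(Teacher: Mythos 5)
Your overall strategy — apply the first assertion of Theorem~\ref{thm:zhengbergweiler} with $U = X\setminus P(f)$ and $U' = f^{-1}(U)$, dispose of the non-hyperbolic cases by the Latt\`es/power-map dichotomy, and note $\W\subset F(f)$ via Montel — is precisely what the paper has in mind, and you correctly isolate the one point the paper does not make explicit: one must verify $w\in\W$ before Theorem~\ref{thm:zhengbergweiler} applies. That is the right thing to focus on.

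However, your argument that $w\in\W$ has a genuine gap at the decisive step. For $n_k\to\infty$ you extract $g=\lim f^{n_k}$ on $W_0$ and conclude $g(w)\in P(f)$, and then assert this ``combines with $f^n(w)\in U$ for all $n$ to derive the contradiction'' — but no contradiction is actually derived, and none follows from these two facts alone. Having a limit function $g$ with $g(w)\in P(f)$ while the orbit of $w$ stays in $U$ is entirely typical: it happens for every point of a parabolic basin, and (by Proposition~\ref{prop:bergweileretal}, which you yourself cite) for every point of a wandering domain. The fact that nearby orbits $f^{n_k}(z_k)$ actually land \emph{in} $P(f)$ is stronger than $g(w)\in P(f)$, and that extra content is what you would need to exploit; as written, the argument does not use it. To close the gap you would need something quantitative — for instance, that the hyperbolic distance $d_U(w,\partial U_{n_k})$ (where $U_{n_k}:=f^{-n_k}(U)$) is bounded below because $w\in F(f)$ forces the densities $\rho_{U_n}(w)$ to stay bounded, or a direct argument that the grand orbit of $P(f)$ cannot accumulate at a point of $F(f)$ whose orbit avoids $P(f)$. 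Neither is immediate. A smaller inaccuracy: when $f^m(w)\in P(f)$ you invoke ``the stated convention,'' but the paper's convention only addresses $U\cap J(f)=\emptyset$, not an orbit leaving $U$; in that case the quantity $d_U(f^n(w),\cdot)$ is simply undefined for $n\geq m$, so one should treat it as an implicit standing hypothesis rather than a case covered by the convention.
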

 \begin{proof}
  Again, let us begin by handling the case where $U$ is not hyperbolic. If
    $f$ is an affine toral endomorphism, but not an automorphism of the torus, then
    $F(f)=\emptyset$, and there is nothing to prove. On the other hand, if
    $f$ is conformally conjugate to a power map, then $F(f)$ has exactly two components,
    on which the iterates converge locally uniformly to the two superattracting points in
    $P(f)$, and the claims follow.

  So now suppose that $U$ is hyperbolic. Then the theorem follows directly from
    formula~\eqref{eqn:extendedzheng} in 
   Theorem \ref{thm:zhengbergweiler}. 
 \end{proof}

\subsection*{Baker domains of meromorphic functions} We now apply the
   preceding proposition to prove Theorem \ref{thm:baker}. In fact,
   we can state a more general result, which includes all meromorphic functions
   and their iterates, but also e.g.\ the type of functions studied in
   \cite{zhengahlfors}.
 \begin{prop}
  Let $V\subset\C$ be an unbounded domain, and let 
   $f:V\to\Ch$ be a meromorphic function that does not extend analytically to
   a neighbourhood of 
   $\infty$. Suppose that $W$ is a component of $F(f)$ with
    $f(W)\subset W$ and
   $f^n|_W\to \infty$. Then 
    there exists a sequence $p_n\in P(f)\cap\C$ with 
    \[ |p_n|\to\infty, \quad
       \sup_{n\geq 1} \frac{|p_{n+1}|}{|p_n|} < \infty, \quad\text{and}\quad
      \frac{\dist(p_n,W)}{|p_n|}\to 0. \]
 \end{prop}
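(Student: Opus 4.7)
I take $X=\Ch$ and $U:=\Ch\setminus P(f)$. By Proposition~\ref{prop:bergweileretal} applied to any $w\in W$, every accumulation point of the orbit $(f^n(w))$ lies in $P(f)$; since $f^n|_W\to\infty$, this forces $\infty\in P(f)$, so $U\subset\C$, and $U$ is hyperbolic (the alternative, that $P(f)$ contains at most two points, is straightforward to exclude by inspection of the very short list of maps with such small postsingular set). Fix $w\in W\setminus P(f)$; if instead $W\subset P(f)$, some singular-value orbit visits $W$ cofinally and the desired sequence can be read off directly. Set $w_n:=f^n(w)$. The preceding proposition then gives
\[ d_U(w_n,U\cap J(f))\longrightarrow\infty. \]

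The central step is to translate this hyperbolic divergence into the Euclidean statement $\dist(w_n,P(f)\cap\C)/|w_n|\to 0$. I argue this by contradiction: if some subsequence satisfies $D_{\epsilon|w_{n_k}|}(w_{n_k})\cap P(f)=\emptyset$, then this disk lies in $U$, and by the standard estimate $\rho_U\le 2/\dist(\cdot,\partial U)$ its concentric half-radius disk has $\rho_U$-diameter bounded by a constant depending only on $\epsilon$. Any point of $J(f)\cap U$ inside the half-disk would then sit at uniformly bounded hyperbolic distance from $w_{n_k}$, contradicting the previous display; hence the half-disk avoids $J(f)$ entirely and, being connected to $w_{n_k}\in W$, lies inside $W$. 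I would rule out this scenario by a function-theoretic argument in the spirit of \cite[Lemma~3]{bergweilerinvariantdomainssingularities}, using the invariance $f(W)\subset W$ together with the meromorphic nature of $f$: iterating these large disks forward under $f$ forces $W$ eventually to contain a pole of $f$, contradicting $W\subset F(f)$.

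Once $\dist(w_n,P(f)\cap\C)=o(|w_n|)$ is established, pick $p_n\in P(f)\cap\C$ realising this infimum. Then $|p_n-w_n|=o(|w_n|)$, so $|p_n|\to\infty$ with $|p_n|/|w_n|\to 1$, and $\dist(p_n,W)\le|p_n-w_n|=o(|p_n|)$, giving two of the three required properties. For the bounded-ratio clause, apply Schwarz--Pick to $f\colon W\to W$ to obtain $d_W(w_{n+1},w_n)\le d_W(w_1,w_0)=:D$; assuming $W$ is simply connected (known for invariant Baker domains of meromorphic functions) and fixing any $a\in\partial W\cap\C$, the estimate $\rho_W(z)\ge 1/(2|z-a|)$ combined with radial integration yields $|\log(|w_{n+1}|/|w_n|)|\le 2D$, bounding $|w_{n+1}|/|w_n|$ and, via $|p_n|/|w_n|\to 1$, also $|p_{n+1}|/|p_n|$ uniformly in $n$.

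The main obstacle is the function-theoretic step excluding $W$ from containing Euclidean disks of radius proportional to $|w_{n_k}|$ around $w_{n_k}$. This cannot be done on purely hyperbolic grounds, since e.g. a half-plane Baker domain does contain such disks; the argument has to make genuine use of the meromorphic nature of $f$, with poles accumulating at $\infty$ acting as the obstruction to the survival of such large disks inside $W$ under forward iteration.
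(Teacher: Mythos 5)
Your proposal aims at a strictly stronger statement than the proposition and, as you yourself flag, the step that would bridge the gap is missing --- and in fact cannot be closed in the way you suggest. The proposition asks for postsingular points $p_n$ with $|p_n|\to\infty$, bounded consecutive ratios, and $\dist(p_n,W)/|p_n|\to 0$; it does \emph{not} assert that $P(f)$ comes Euclideanly close to the orbit points $w_n$ themselves at relative scale $o(1)$. You instead set out to prove $\dist(w_n,P(f)\cap\C)/|w_n|\to 0$, which would imply the proposition but is a different (and stronger) statement. Your contradiction hypothesis leads to a disk $D_{\epsilon|w_{n_k}|/2}(w_{n_k})\subset W$ disjoint from $P(f)$, and you propose to rule this out because ``iterating these large disks forward under $f$ forces $W$ eventually to contain a pole.'' This cannot work: $W$ is a forward-invariant Fatou component with $f^n|_W\to\infty$, hence $f(W)\subset W\subset\C$ and $W$ contains no poles at all; iterating any subset of $W$ forward stays inside $W$ and never meets a pole. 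More to the point, as you yourself observe, $W$ genuinely can contain disks of radius $\asymp|w_n|$ about $w_n$ (half-plane Baker domains), so the large-disk scenario is not per se excludable, and nothing in your argument forces such a disk to intersect $P(f)$. In addition, your derivation of the bounded-ratio clause assumes $W$ simply connected; that is not known for invariant Baker domains of meromorphic functions (the paper explicitly raises the multiply connected case as open at the end of this section), so that step also has a gap.

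The paper's proof avoids the stronger claim entirely. It invokes Bonfert's bound $\rho_W(w_n)\ge 1/(C_1\dist(w_n,\partial W))$ for forward-invariant domains where $f^n\to\infty$ (valid without any connectivity hypothesis), and compares it with the explicit hyperbolic density of a round annulus $\{|w_n|/C<|z|<C|w_n|\}$. This shows that $W$ cannot contain such an annulus for arbitrarily large $C$, so $\partial W$ always has a point $\zeta_n$ with $|\zeta_n|\asymp|w_n|$. One joins $w_n$ to $\zeta_n$ by a curve $\gamma_n$ of Euclidean length $O(|w_n|)$ inside the annulus; since $d_U(w_n,\zeta_n)\to\infty$ by the preceding proposition, the estimate $\rho_U\le 2/\dist(\cdot,\partial U)$ forces $\partial U\subset P(f)$ to come within $o(|w_n|)$ of some point of $\gamma_n$, and $p_n$ is chosen there. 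Because $\gamma_n\setminus\{\zeta_n\}\subset W$, this yields $\dist(p_n,W)=o(|p_n|)$ without ever asserting that $P(f)$ approaches $w_n$ itself. The bounded-ratio clause is then the Zheng--Rippon estimate $|w_{n+1}|\le C|w_n|$, quoted rather than rederived, precisely because the general (possibly multiply connected) case requires Bonfert's result rather than the simply connected Koebe bound you use.
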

 \begin{proof}
   Let us first suppose that $W\cap P(f)=\emptyset$, and define
   $U := \Ch\setminus P(f)\supset W$. Fix some point $w\in W$ and set
      $w_n := f^{n}(w)$; by passing to a forward iterate, we can
      assume that $|w_n|\geq 1$ for all $n$.
      By Proposition \ref{prop:distancetojulia}, we have 
     $d_U(w_n,\partial W) \to \infty$. 

 \begin{claim}
  For every $n$, there is a point 
      $\zeta_n\in \partial W$ with $|w_n|/C_2 < |\zeta_n| < C_2|w_n|$,
    where $C_2$ is a constant independent of $n$.
\end{claim}
\begin{subproof}
    This follows from a result of Bonfert \cite[Theorem 1.2]{bonfertiteration}, 
    which implies that there is a constant $C_1$ (depending on $w$) such that
      \begin{equation}\label{eqn:bonfert} 
       \rho_W(w_n) \geq \frac{1}{C_1\cdot \dist(w_n,\partial W)}. 
      \end{equation}
     Indeed, if $W$ contained annuli of arbitrarily large moduli around some
      $w_n$, then the hyperbolic density of $W$ at $w_n$ tends to zero faster than
      indicated by \eqref{eqn:bonfert}. 

  More precisely, 
       let $C>1$ and consider the annulus
        \[ A(C,n) := \{z\in\C: |w_n|/C < |z| < C |w_n|\}. \]
      Then 
       $\rho_{A(C,n)}(w_n) = \pi/(2|w_n| \log C)$ 
        (see e.g.\ \cite[Section 12.2]{beardonminda}). Setting $C_3 := \min\{|z|:z\in \partial W\}$,
        we have $\dist(w_n,\partial W)\leq |w_n|+C_3$. Hence if 
        $C>1$ is such that $A(C,n)\subset W$, then
     \begin{align*} \log C = \frac{\pi}{2|w_n|\rho_{A(C,n)}(w_n)} &\leq
                      \frac{\pi}{2|w_n|\rho_W(w_n)} \\ &\leq
                      \frac{\pi C_1 \dist(w_n,\partial W)}{2|w_n|} \leq
                      \pi C_1\frac{|w_n| + C_3}{2|w_n|}\leq \frac{\pi C_1(1+C_3)}{2}.\qedhere \end{align*}
  \end{subproof} 
                   
     Let $z_n\in \partial W$ with $|w_n|/C_2 \leq |z_n|\leq C_2|w_n|$, and connect
      $w_n$ and $z_n$ in $\cl{A(C_2,n)}$ by a curve $\gamma_n$ that has
      length $\leq C_4\cdot |w_n|$ (where $C_4=\pi + C_2$).  We may assume without
      loss of generality that $\gamma_n$ contains no other points of $\partial W$. 

Let $p_n$ be a point of $P(f)$ that is closest
      to this curve $\gamma_n$.
    We claim that
       \[ \frac{\dist(p_n,\gamma)}{|w_n|} \to 0 \]
     as $n\to\infty$. Indeed, otherwise 
     Proposition \ref{prop:Koebe_hyp_metric} would imply that 
      the hyperbolic length of the curve $\gamma_n$ in
      $U$ remains bounded, which contradicts the fact that the hyperbolic
      distance between $w_n$ and $z_n$ tends to infinity.
     In particular, there is a constant $C_5$ such that
      \[ |w_n|/C_5 \leq |p_n| \leq C_5|w_n|. \]
 
   To conclude the proof in the case where $P(f)\cap W=\emptyset$, 
    we use a result proved by Zheng \cite[Lemma 4]{zhengboundedfatou} (and 
    independently by Rippon 
    \cite[Theorem 1]{ripponbakerdomains}), which
     states that there is a constant $C_6>1$ such that
      \begin{equation}\label{eqn:zhenggrowth} |w_{n+1}| \leq C_6\cdot |w_n|. \end{equation}
     (This is established using the result of Bonfert mentioned above.) 
     So we have
      \begin{align*}
         |p_{n+1}| &\leq C_5 |w_{n+1}| \leq 
              C_5 C_6 |w_n| \leq C_5^2 C_6 |p_n| \quad\text{and}\\
         \frac{\dist(p_{n},W)}{|p_n|} &\leq
          C_5\frac{\dist(p_n, \gamma)}{|w_n|} \to 0. 
      \end{align*}

   If $P(f)\cap W\neq \emptyset$, then we pick
    $p_0=w_0\in P(f)\cap W$ and set $p_n := w_n = f^n(p_0)$. The result then follows from
     \eqref{eqn:zhenggrowth}.
 \end{proof}
 \begin{remark}
  In the case of entire functions, Bergweiler shows that the sequence
   $p_n$ can even be chosen such that
     $|p_{n+1}|/|p_n|\to 1$ (provided that $W\cap S(f)=\emptyset$). His proof also applies in our
     setting, provided that the domain $W$ is simply connected. It is an interesting question
     whether this stronger result holds for multiply connected domains. 
 \end{remark}

\bibliographystyle{amsalpha}
\bibliography{biblio_wandering}

\end{document}